\documentclass[smallextended,referee,envcountsect,]{svjour3}
\smartqed
\usepackage{graphicx}
\usepackage{mathptmx}
\usepackage{latexsym}
\usepackage{amsfonts}
\usepackage{mathrsfs}

\usepackage{amsmath}
\usepackage{amssymb}
\journalname{JOTA}

\newcommand{\Pb}{\mathbb P}
\newcommand{\R}{\mathbb R}
\newcommand{\N}{\mathbb N}

\newcommand{\X}{\mathbb X}
\newcommand{\Y}{\mathbb Y}
\newcommand{\Uball}{{\mathbb B}}
\newcommand{\Usfer}{{\mathbb S}}

\newcommand{\dom}{{\rm dom}\, }
\newcommand{\graph}{{\rm gph}\,}

\newcommand{\nullv}{\mathbf{0}}

\newcommand{\bd}{{\rm bd}\, }
\newcommand{\inte}{{\rm int}\, }

\newcommand{\SFP}{{\rm SFP}}    

\newcommand{\MPEC}{{\rm MPEC}}    

\newcommand{\Solv}{\Sigma}    

\newcommand{\DRC}{{\rm DRC}}    

\newcommand{\GDRC}{{\rm GDRC}}    

\newcommand{\mf}{\nu}    

\newcommand{\Lin}{{\mathcal L}}

\newcommand{\stsl}[1]{|\nabla #1|}
\newcommand{\pastsl}[1]{|\nabla_x #1|}       

\newcommand{\ball}[2]{{\rm B}\left(#1;#2\right)}      
\newcommand{\cball}[2]{{\rm B}\left[#1;#2\right]}     

\newcommand{\lip}[2]{{\rm lip}(#1;#2)}     
\newcommand{\clm}[2]{{\rm clm}(#1;#2)}      
\newcommand{\dist}[2]{{\rm dist}\left(#1;#2\right)}
\newcommand{\exc}[2]{{\rm exc}(#1;#2)}

\newcommand{\Ncone}[2]{{\rm N}(#1;#2)}       
\newcommand{\haus}[2]{{\rm haus}(#1;#2)}

\newcommand{\Proj}[2]{\prod(#1;#2)}

\newcommand{\Lipusc}[2]{{\rm Lipusc}(#1;#2)}      

\newcommand{\Liplsc}[3]{{\rm Liplsc}(#1;#2,#3)}      
\newcommand{\clmsv}[3]{{\rm clm}(#1;#2,#3)}
\newcommand{\Lip}[3]{{\rm Lip}(#1;#2,#3)}      

\begin{document}

\title{On Lipschitzian properties of multifunctions defined
implicitly by ``split" feasibility problems}

\titlerunning{Lipschitzian properties of multifunctions defined
by ``split" feasibility problems}        


\author{Amos Uderzo}

\institute{Amos Uderzo \at
             Università di Milano-Bicocca \\
              Milan, MI 20125, Italy\\
              amos.uderzo@unimib.it
}

\date{Received: date / Accepted: date}

\maketitle

{\hfill Version updated: \today}.

\vskip.5cm

\begin{abstract}
In the present paper, a systematic study is made of quantitative
semicontinuity (a.k.a. Lipschitzian) properties of certain multifunctions, which
are defined as a solution map associated to a family of parameterized
``split" feasibility problems. The latter are a particular class
of convex feasibility problems with well recognized
applications to several areas of engineering and systems biology.
As a part of a perturbation analysis of variational systems,
this study falls within the framework of a line of research pursued by
several authors. It is performed by means of techniques of variational analysis,
which lead to establish sufficient conditions
for the Lipschitz lower semicontinuity, calmness,
isolated calmness, Lipschitz upper semicontinuity and Aubin property
of the solution map. Along with each of these properties, a quantitative estimate of the
related exact bound is also provided.
The key elements emerging on the way to achieving the main results
are dual regularity conditions qualifying the problem behaviour, which are
expressed in terms of convex analysis constructions involving
problem data.
The approach here proposed tries to unify the study of the
aforementioned properties.
\end{abstract}

\keywords{``Split" feasibility problem \and perturbation analysis \and Lipschitzian behaviour
\and implicit multifunction \and subdifferential calculus \and error bound}
\subclass{49J53 \and  49K40 \and 65K10 \and 90C25 \and 90C31}


\section{Introduction}

Let $(\X,\|\cdot\|)$ and $(\Y,\|\cdot\|)$ be real normed (vector) spaces.
Given a linear bounded operator $A:\X\longrightarrow\Y$, and two nonempty,
closed and convex sets $C\subseteq\X$ and $Q\subset\Y$, the following problem
$$
  \hbox{find } x\in C:\ A(x)\in Q.   \leqno (\SFP)
$$
is an abstract formulation of the ``split" feasibility problem, which was
originally introduced in \cite[Section 6.1]{CenElf94} in a finite-dimensional setting.
Since its solution set can be clearly represented as $\Sigma=C\cap A^{-1}(Q)$,
one sees that (\SFP) falls in the class of the convex feasibility problems, which
consist in finding a common point to several given convex sets
(see \cite{BauBor93,BauBor96,Breg67,Comb93,GuPoRa67}).
Nevertheless, for several reasons the $(\SFP)$ format turned out to be attractive
in itself, becoming soon a distinct research topic.
Its ``split" structure, connecting different spaces, can trigger
certain advantages from both the computational and the applicative viewpoint.
The main solution algorithms (e.g. CQ algorithms) are known to be based on
projections. Projecting onto $A^{-1}(Q)$, provided that such set can be
actually computed, is typically harder than projecting onto $C$ or $Q$.
Besides, $C$ and $Q$ may happen to live in Euclidean spaces with highly
different dimensions. In the context of applications, the ``split" structure
can reflect more faithfully the physical reality of a model.
Having found relevant applications in signal processing and in image
reconstruction (see \cite{CeElKoBo05,Hurt89}), (\SFP) has been widely studied,
especially from the point of view of solution algorithms
(see \cite{CenElf94,HuXuYe25,HuXuYe25b,ReTrMa20,ReiTuy21,WaHuLiYa17,WaGaLiYa24}).

In the present paper, a parameterized version of ``split" feasibility problems
will be considered in the following terms. Let $(P,d)$ be
a metric space representing the set of all parameter perturbations.
Given a mapping $A:P\times\X\longrightarrow\Y$ and two
set-valued maps $C:P\rightrightarrows\X$ and $Q:P\rightrightarrows\Y$,
any element $p\in P$ determines the ``split" feasibility problem
$$
  \hbox{find } x\in C(p):\ A(p,x)\in Q(p).   \leqno (\SFP_p)
$$
The solution set-valued map $\Solv:P\rightrightarrows\X$ associated to the resulting family of perturbed
``split" feasibility problems $(\SFP_p)$ is therefore given by
\begin{equation}     \label{eq:Solvpsetchar}
  \Solv(p)=\left\{x\in C(p):\ A(p,x)\in Q(p)\right\}=
  C(p)\cap A(p,\cdot)^{-1}(Q(p)).
\end{equation}
The subject of this paper are so-called Lipschitzian
properties of the multifunction $\Solv$.
The study here exposed follows a research line recently
initiated by \cite{HuXuYe25,HuXuYe25b} and should be regarded
as a part of a more comprehensive perturbation analysis of
$(\SFP)$. This term denotes an apparatus of systematic investigations
about effects produced on the solution set by changes of problem data,
which may be caused by different reasons. These investigations
may be qualitative, e.g. when dealing with existence of solutions (solvability)
or their topological properties under changes of data, or quantitative,
when focusing on the measurement of variations of the solution set
as a consequence of measured changes of the perturbation parameter
(sensitivity).
The perturbation analysis is a crucial issue in understanding the
deep nature of many mathematical problems. In the specific case of
$(\SFP)$, some reasons for conducting a perturbation analysis are
well explained in \cite{HuXuYe25,HuXuYe25b}. To them, some further
motivations can be added, which come from the topics described below:

\begin{itemize}
  \item[$\bullet$] {\it mathematical programming with $(\SFP)$ constraints}.
  Given $\vartheta:P\times\X\longrightarrow\R\cup\{\pm\infty\}$,
  $\Omega\subseteq P$ and a family $(\SFP_p)$, consider the optimization
  problem
\begin{equation}    \label{def:mpsfp}
  \min \vartheta(p,x)\ \hbox{ subject to }\ p\in\Omega,\ x\in\Solv(p),
\end{equation}
where $\Solv$ is as in (\ref{eq:Solvpsetchar}).
The definition of the feasible region of such an optimization problem
involves lower level ``split" feasibility problems to be solved for each
value of $p$. This may be not always possible or convenient for the
analysis of the given optimization problem, so the behaviour of $\Solv$
as an implicit multifunction becomes an important issue.
Indeed, the problem (\ref{def:mpsfp}) can be reformulated as a mathematical
program with equilibrium constraints (for short, $\MPEC$). A successful approach to
derive optimality conditions for $\MPEC$ is known to rely on the
Lipschitzian stability of parameterized variational systems
(see \cite[Section 5.2]{Mord06b}).

\vskip.25cm

  \item[$\bullet$] {\it bi-level $(\SFP)$}. Given two maps
  defined on normed spaces $A_1,\, A_2:\Pb\times\X\longrightarrow\Y$,
  with $A_1(\cdot,x)\in\Lin(\Pb,\Y)$ for each $x\in\X$ and
  $A_2(p,\cdot)\in\Lin(\X,\Y)$ for each $p\in\Pb$,
  two subsets $C_1\subseteq\Pb$ and $Q_1\subseteq\Y$, and two set-valued maps
  $C_2:\Pb\rightrightarrows\X$ and $Q_2:\Pb\rightrightarrows\Y$, the problem
  $$
  \hbox{find } p\in C_1:\ A_1(p,x)\in Q_1 \hbox{ and }  x\in\Solv(p),
  $$
where $\Solv:\Pb\rightrightarrows\X$ is the solution map of the lower
level problem
$$
  \hbox{find } x\in C_2(p):\ A_2(p,x)\in Q_2(p),
$$
can be referred to as a bi-level ``split" feasibility problem. Also in this context,
without solving each lower level problem, one can
take profit from indirect information about the behaviour of the
multifunction $\Solv$.

\vskip.25cm

  \item[$\bullet$] {\it approximated solution methods}. It is well known
  that there are some classes of $(\SFP)$ significantly more tractable,
  from both the theoretical and the computational viewpoint, than others.
  An instance of this is provided by $(\SFP)$ with polyhedral data:
  advantages in computing projections (on which CQ algorithms rely) and
  the possibility of exploiting linear algebra tools make them preferable
  to $(\SFP)$ with general convex data. Thus, one could consider to perturb
  a given $(\SFP)$ in such a way to generate a sequence $(\SFP_{p_n})_{n\in\N}$,
  with each problem of the sequence (approximated problem) having polyhedral data.
  Such an approach opens the question whether sequences of solutions to
  approximated problems do and, if so, how converge to a solution of the
  original $(\SFP)$. Meaningful insights into this question can be afforded
  from a knowledge of the behaviour of the multifunction $\Solv$.
\end{itemize}

Within the wider context of convex feasibility problems, situations
where data vary have been studied also in \cite{CeGiLeSc16,GhTeStCe13},
in modeling cooperative wireless sensor network positioning.
Nevertheless, in those problems a parametrization of data, which is
independent of the problem unknown, is not considered.
Consequently, a study of the solution set as a multifunction
depending on the parameter variables can not be found there.

\subsection{Outline, methodology and contributions}

The contents of the paper are arranged as follows.
In the next subsection the basic notations employed in the paper
are listed.
In Section \ref{Sect:2}, the definition of the main properties under investigation
is recalled, relationships between some of them are pointed out, and needed technical
preliminaries are gathered.
In Section \ref{Sect:3}, the main results are exposed. They are sufficient conditions
for $\Solv$ to satisfy each of the Lipschitzian properties recalled
in Section \ref{Sect:2}, with the perturbation parameter $p$ varying in
a metric space. Each of them is complemented  with an estimate of the related
exact bound, quantifying ``how much" the property is fulfilled.
In Section \ref{Sect:4}, a specialized sufficient condition for the Aubin
property of $\Solv$ is established under a convexity assumption, with the
perturbation parameter now varying in a normed space.

A feature of the present study to be remarked is that all of the presented
sufficient conditions, except one, are derived from the same preliminary
result, which states a condition for local solvability and error bounds.
Such a feature reflects the methodology chosen to accomplish the task
of the present study. Instead of performing one independent analysis for
each of the Lipschitzian properties, according to the approach here adopted
all the results are achieved as a consequence of a unique qualification condition,
and its nonlocal counterpart in the case of upper Lipschitz continuity.
The price to be paid for getting a unifying scheme of analysis is that, in general, such
qualification condition turns out to be too strong to obtain full
characterizations.

In fact, $(\SFP_p)$ can be regarded as a class of parametric variational systems,
namely a parameterized version of generalized equations. A well developed
literature devoted to the study of the Lipschitzian properties
of the related solution map is nowadays at disposal within variational
analysis (see, among others, \cite{DonRoc14,LevRoc94,Mord06,Robi79,Robi80}).
A characterization of the Aubin property of $\Solv$ has been established
in \cite{HuXuYe25}, in the particular case of a finite-dimensional setting,
in which $C$ and $Q$ are supposed to be constant (given sets) and the map $A$
is perturbed in the Euclidean space of all the matrices with real entries.
This result was achieved by exploiting a finite-dimensional characterization
of the Aubin property of solution map to variational systems due to B.S.
Mordukhovich, which relies on the coderivative calculus.
In contrast, to perform a comprehensive analysis allowing perturbations
to vary in a metric space, a different approach has been followed in the
present paper. Whereas some constructions based on subdifferential calculus
are still used, solvability and a key error bound estimate, on which the main
results are based, are obtained by means of elementary tools of variational analysis
in metric spaces (partial strong slope), via a scalarization method
in the spirit of \cite{BorZhu05}.

\subsection{Notations}

The basic notations in use throughout the paper are standard.
$\R$ denotes the field of real numbers, $\N$ denotes its subset of
the natural ones, while $\R^d$ stands for the $d$-dimensional Euclidean space.
Given a (extended) scalar function $\varphi:X\longrightarrow\R\cup\{\pm\infty\}$
defined on a metric space $X$,
$\dom\varphi=\varphi^{-1}(\R)$ denotes its domain and, if $\alpha\in\R$,
$[\varphi\le\alpha]=\{x\in X\ :\ \varphi(x)\le\alpha\}$ denotes its $\alpha$-sublevel
set, $[\varphi>\alpha]=\{x\in X:\ \varphi(x)>\alpha\}$
denotes its $\alpha$-strict superlevel set, whereas $[\varphi=\alpha]=\varphi^{-1}(\alpha)$
the $\alpha$-level set.
If $\Omega$ is a subset of $X$,
$\dist{x}{\Omega}=\inf_{z\in\Omega}d(z,x)$ denotes that distance of a point $x\in X$ from
$\Omega$, with the convention that $\dist{x}{\varnothing}=+\infty$.
Consistently, if $r\ge 0$, $\cball{\Omega}{r}=[\dist{\cdot}{\Omega}\le r]$
indicates the closed $r$-enlargement of the set $\Omega$, with radius $r$. In particular, if
$\Omega=\{x\}$, $\cball{x}{r}$ denotes the closed ball with center $x$
and radius $r$.
Similarly, if $r>0$, $\ball{\Omega}{r}=[\dist{\cdot}{\Omega}<r]$ and
$\ball{x}{r}$ stand for the open $r$-enlargement of $\Omega$ and the open ball centered
at $x$, with radius $r$.
The symbol $\bd\Omega$ and $\inte\Omega$ indicate the boundary and the topological interior
of $\Omega$, respectively.
Given a pair of subsets of the same space, say $\Omega$ and $S$, the excess of $\Omega$ over $S$
is indicated by $\exc{\Omega}{S}=\sup_{x\in \Omega}\dist{x}{S}$, whereas their
Hausdorff-Pompeiu distance by $\haus{\Omega}{S}=\max\{\exc{\Omega}{S},\, \exc{S}{\Omega}\}$.
Given a set-valued map $F:P\rightrightarrows X$ between metric spaces, $\dom F=
\{p\in P:\ F(p)\ne\varnothing\}$ denotes the effective domain
of $F$, while $\graph F=\{(p,x)\in P\times X:\ x\in F(p)\}$ its graph.

Whenever $(\X,\|\cdot\|)$ is a normed space, $\nullv$ stands for its null element.
The (topological) dual space of $\X$ is denoted by $\X^*$, with $\langle\cdot,\cdot\rangle
:\X^*\times\X\longrightarrow\R$ denoting their duality pairing and $\nullv^*$
the zero functional over $\X$. In this context, $\cball{\nullv}{1}$ and $\bd\cball{\nullv}{1}$
(resp. $\cball{\nullv^*}{1}$ and $\bd\cball{\nullv^*}{1}$)
will be simply indicated by $\Uball$ and $\Usfer$ (resp. $\Uball^*$ and $\Usfer^*$ ), respectively.
By $\Lin(\X,\Y)$ the space of all bounded linear operators between $\X$ and $\Y$
is denoted. This space will be equipped with the operator norm, indicated by $\|\cdot\|_{\Lin}$.
Given $A\in\Lin(\X,\Y)$, $A^*:\Y^*\longrightarrow\X^*$ stands for the adjoint
operator to $A$.

The acronyms l.s.c. and u.s.c., standing for lower semicontinuous
and upper semicontinuous, respectively, will be used.
The meaning of some further symbols employed in the subsequent sections
will be explained contextually to their introduction.

\vskip.5cm


\section{Preliminaries}     \label{Sect:2}

Throughout the paper, with reference to a family $(\SFP_p)$, the following assumptions on the problem data
are maintained:

\begin{itemize}

\item[($a_0$)] $(\X,\|\cdot\|)$ is a Banach space;

\item[($a_1$)] the set-valued maps $C:P\rightrightarrows\X$ and $Q:P\rightrightarrows\Y$ take
nonempty, closed and convex values;

\item[($a_2$)] $A(p,\cdot)\in\Lin(\X,\Y)$ for every $p\in P$.

\end{itemize}

The assumption ($a_0$) enables to derive solution existence from the metric
completeness of the decision space, via its variational reformulation. Therefore,
as long as following the present approach, it seems to be hardly avoidable.
The role of assumptions ($a_1$) and ($a_2$) is to ensure that, even in
the presence of perturbations, the data of each instance of $(\SFP_p)$
match the assumptions of $(\SFP)$.

\begin{remark}    \label{rem:Solvtoppro}
From the equalities in (\ref{eq:Solvpsetchar}) it should be clear
that under assumptions ($a_1$) and ($a_2$),
as an intersection of closed convex sets, the set $\Solv(p)$ is
(possibly empty) closed and convex for every $p\in P$. In other words,
the multifunction $\Solv:P\rightrightarrows\X$ always takes closed and
convex values.
Since $\X$ is, in particular, a metric space, it is regular as a topological
space. Therefore, according to \cite[Theorem 17.10]{AliBor06}, whenever
$\Solv$ is u.s.c., $\graph\Solv$ turns out to be a closed subset of $P\times\X$.
\end{remark}


\subsection{Solution map scalarization}

A scalar characterization of the multifunction $\Solv$ can be
performed via the following merit function: let
$\mf:P\times\X\longrightarrow [0,+\infty)$ be defined by
\begin{equation}    \label{eq:mfunctdef}
    \mf(p,x)=\dist{A(p,x)}{Q(p)}+\dist{x}{C(p)}.
\end{equation}
It is worthwhile remarking that by virtue of assumption $(a_1)$, $\dom\mf=
P\times\X$. The relation connecting $\Solv$ and $\mf$ is made
clear below.

\begin{proposition}      \label{pro:scacharSolv}
Given a family $(\SFP_p)$, it holds:
\begin{equation}   \label{eq:Solvchar}
  \Solv(p)=[\mf(p,\cdot)\le 0]=[\mf(p,\cdot)=0].
\end{equation}
\end{proposition}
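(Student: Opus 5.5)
The plan is to prove both equalities in (\ref{eq:Solvchar}) by double inclusion, working with a fixed $p\in P$. The only tools needed are the definition (\ref{eq:mfunctdef}) of $\mf$ and the closedness of the sets $C(p)$ and $Q(p)$ guaranteed by assumption ($a_1$).

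The equality $[\mf(p,\cdot)\le 0]=[\mf(p,\cdot)=0]$ is immediate. Since distances are nonnegative, $\mf$ takes values in $[0,+\infty)$. A point with $\mf(p,x)\le 0$ must therefore satisfy $\mf(p,x)=0$, and the reverse inclusion is trivial.

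It remains to show $\Solv(p)=[\mf(p,\cdot)=0]$.
\begin{itemize}
\item[$\bullet$] Take $x\in\Solv(p)$. Then $x\in C(p)$ and $A(p,x)\in Q(p)$ by (\ref{eq:Solvpsetchar}). Hence $\dist{x}{C(p)}=0$ and $\dist{A(p,x)}{Q(p)}=0$, so $\mf(p,x)=0$.
\item[$\bullet$] Conversely, let $\mf(p,x)=0$. Each summand in (\ref{eq:mfunctdef}) is nonnegative, so both vanish: $\dist{x}{C(p)}=0$ and $\dist{A(p,x)}{Q(p)}=0$. For any nonempty subset $\Omega$ of a metric space, $\dist{z}{\Omega}=0$ holds exactly when $z$ lies in the closure of $\Omega$. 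Since $C(p)$ and $Q(p)$ are closed by ($a_1$), this gives $x\in C(p)$ and $A(p,x)\in Q(p)$, i.e. $x\in\Solv(p)$.
\end{itemize}

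No step is a real obstacle. The one point needing care is the use of closedness in the converse inclusion: without it, a zero distance would only place $x$ in the closure of $C(p)$, or $A(p,x)$ in the closure of $Q(p)$. Nonemptiness from ($a_1$) ensures the distances are finite, so $\mf$ is real-valued, although the argument would go through anyway under the convention $\dist{x}{\varnothing}=+\infty$. Linearity of $A(p,\cdot)$ from ($a_2$) and convexity are not needed here.
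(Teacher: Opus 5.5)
Your proof is correct and follows the same route as the paper. The paper simply cites the nonnegativity of $\mf$ and the closedness of the values $C(p)$ and $Q(p)$ from $(a_1)$, and you spell out the same two facts as an explicit double inclusion.
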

\begin{proof}
Both the equalities in the thesis follow at once from the nonnegativity
of $\mf$ and the closedness of the values of $C$ and $Q$, according
to assumption $(a_1)$.
\smartqed\qed
\end{proof}

It should be evident that definition in (\ref{eq:mfunctdef}) is not
the unique possible choice for a merit function associated to $(\SFP_p)$.
Within a scalarization approach,
parallel studies of the topic at the issue could be conducted
by means of any function of the form $\mf_{\|\cdot\|}$, given by
$$
  \mf_{\|\cdot\|}(p,x)=\|(\dist{A(p,x)}{Q(p)}\, ,\,\dist{x}{C(p)})\|,
$$
where $\|\cdot\|$ is any norm of $\R^2$.


\subsection{Tools of variational analysis in metric spaces}     \label{Sect:1}

The main properties, investigated in the present paper with
reference to the solution map $\Solv$, are recalled in the
next definition in a general setting.

\begin{definition}[Lipschitzian properties]    \label{def:Lipsemcont}
Let $F:P\rightrightarrows X$ be a set-valued mapping between
metric spaces and let $(p_0,x_0)\in\graph F$. The map $F$ is said:

\begin{itemize}

\item[(i)] to be {\it Lipschitz lower semicontinuous} at $(p_0,x_0)$ if
there exist positive constants $\delta$ and $\ell$ such that
\begin{equation}     \label{in:defLiplsc}
  F(p)\cap\cball{x_0}{\ell d(p,p_0)}\ne\varnothing,\quad
  \forall p\in\cball{p_0}{\delta};
\end{equation}
the value
$$
  \Liplsc{F}{p_0}{x_0}=\inf\{\ell>0:\ \exists\delta>0
  \hbox{ for which (\ref{in:defLiplsc}) holds}\}
$$
is called {\it exact Lipschitz lower semicontinuity bound} of $F$
at $(p_0,x_0)$.

\item[(ii)] to be {\it calm} at $(p_0,x_0)$ if there exist
positive constants $\delta$, $\zeta$ and $\ell$ such that
\begin{equation}    \label{in:defsvcalm}
   F(p)\cap\cball{x_0}{\zeta}\subseteq\cball{F(p_0)}{\ell
   d(p,p_0)},\quad\forall p\in\cball{p_0}{\delta};
\end{equation}
the value
$$
  \clmsv{F}{p_0}{x_0}=\inf\{\ell>0:\ \exists\delta,\,\zeta>0
  \hbox{ for which (\ref{in:defsvcalm}) holds}\}
$$
is called {\it exact calmness bound} of $F$ at $(p_0,x_0)$.

\item[(iii)] to be {\it Lipschitz upper semicontinuous} (or
{\it outer Lipschitz continuous/upper Lipschitzian}) at $p_0$ if
there exist positive constants $\delta$ and $\ell$ such that
\begin{equation}    \label{in:defLipusc}
   F(p)\subseteq\cball{F(p_0)}{\ell
   d(p,p_0)},\quad\forall p\in\cball{p_0}{\delta};
\end{equation}
the value
$$
  \Lipusc{F}{p_0}=\inf\{\ell>0:\ \exists\delta>0
  \hbox{ for which (\ref{in:defLipusc}) holds}\}
$$
is called {\it exact Lipschitz upper semicontinuity bound} of $F$
at $p_0$.

\item[(iv)] to satisfy the {\it Aubin property} (or, equivalently,
to be {\it Lipschitz-like}/{\it pseudo-Lipschitz}) around $(p_0,x_0)$ if there exist
positive constants $\delta$, $\eta$ and $\ell$ such that
\begin{equation}    \label{in:defAubin}
   F(p_1)\cap\cball{x_0}{\eta}\subseteq\cball{F(p_2)}{\ell
   d(p_1,p_2)},\quad\forall p_1,\, p_2\in\cball{p_0}{\delta};
\end{equation}
the value
$$
  \Lip{F}{p_0}{x_0}=\inf\{\ell>0:\ \exists\delta,\,\eta>0
  \hbox{ for which (\ref{in:defAubin}) holds}\}
$$
is called {\it exact Aubin property bound} of $F$ around $(p_0,x_0)$.
\end{itemize}
\end{definition}

The existence of a varying terminology and of several
equivalent reformulations of the above properties (e.g. in terms of
semiregularity, metric subregularity, metric regularity, referred to the
inverse multifunction, as one can see in \cite[Section 5.5.4]{BorZhu05},
\cite[Chapter 3]{DonRoc14}, \cite[Chapter 2]{Ioff17} \cite[Chapter 1]{KlaKum02},
\cite[Chapter 1]{Mord06}, \cite[Chapter 9]{RocWet98}) witness and reflect
the crucial role of the above properties in the development of modern
variational analysis.

For the reader's convenience, a scheme of well-known implications
between the properties recalled in Definition \ref{def:Lipsemcont}
is represented below, as it can be drawn from their very definition.
$$
\boxed{
\begin{array}{ccccc}
  \hbox{Lip. u.s.c. at } p_0 & \ \Rightarrow\ & \hbox{calmness at } (p_0,x_0) & \ \Leftarrow\ & \hbox{Aubin property at } (p_0,x_0) \\
   &  &  &  & \Downarrow \\
   &  &  &  & \hbox{Lip. l.s.c. at } (p_0,x_0)
\end{array}
}
$$
Counterexamples can be found (see, for instance, \cite{Uder14b}), which
justify the following negations of implication:
$$
\boxed{
\begin{array}{ccc}
  \hbox{Lip. l.s.c. at } (p_0,x_0) & \quad \begin{array}{c}
                                             \nRightarrow  \\
                                             \nLeftarrow
                                           \end{array}  \quad & \hbox{calmness at } (p_0,x_0),
\end{array}
}
$$
whence the independence of these two properties.

\begin{remark}     \label{rem:lscLiplsc}
(i) In view of a subsequent use, let us recall that if
$F:P\rightrightarrows X$ is Lipschitz l.s.c.
at $(p_0,x_0)\in\graph F$, then it is also l.s.c. in the sense
of \cite{Borw81} at the same point, namely for every $\epsilon>0$
there must exist $\delta_\epsilon>0$ such that
$$
  F(p)\cap\ball{x_0}{\epsilon}\ne\varnothing,\quad\forall
  p\in\cball{p_0}{\delta_\epsilon}.
$$
The latter is a localized counterpart of the classic
notion of lower semicontinuity for set-valued maps acting in
metric spaces.

(ii) Following \cite[Section 3.2]{DonRoc14},
a set-valued map $F:P\rightrightarrows X$ between metric spaces
is said to be inner semicontinuous (for short, i.s.c.) at $p_0\in P$
provided that $\displaystyle\lim_{p\to p_0}
\exc{F(p_0)}{F(p)}=0$. In connection with the present analysis,
it is useful to notice that if $F$ is i.s.c. at $p_0$, then it is
l.s.c. at each $(p_0,x_0)$, for every $x_0\in F(p_0)$.
Indeed, fixed $\epsilon>0$, by inner semicontinuity one gets the
existence of $r_\epsilon>0$ such that
$$
   \exc{F(p_0)}{F(p)}=\sup_{x\in F(p_0)}\dist{x}{F(p)}<\epsilon,
   \quad\forall p\in\cball{p_0}{r_\epsilon}.
$$
So, taken an arbitrary $x_0\in F(p_0)$, from the last inequality
one obtains in particular
$$
  \dist{x_0}{F(p)}<\epsilon,
  \quad\forall p\in\cball{p_0}{r_\epsilon},
$$
which implies
$$
  F(p)\cap\ball{x_0}{\epsilon}\ne\varnothing,\quad\forall
  p\in\cball{p_0}{r_\epsilon}.
$$
Moreover, it is known that $F$ is i.s.c. at $p_0$ iff
the function $p\mapsto\dist{x}{F(p)}$ is u.s.c. at $p_0$
for every $x\in X$ (see \cite[Theorem 3B.2(g)]{DonRoc14}.
So, if $x_0\in F(p_0)$,
this fact implies that if $F$ is i.s.c. at $p_0$, then the
the function $(p,x)\mapsto\dist{x}{F(p)}$ is u.s.c. at $(p_0,x_0)$.

(iii) The reader should notice that, whereas the properties in Definition
\ref{def:Lipsemcont}(i), (ii) and (iv) are referred to a point in the graph
of a given multifunction, Lipschitz upper semicontinuity is referred
to a point in the domain space $P$. Thus, the related implication in the above scheme
means that if $F$ is Lipschitz u.s.c. at $p_0$, then it is calm at each
point $(p_0,x_0)$, for every $x_0\in F(p_0)$.

(iv) Whenever $F:P\longrightarrow X$ is a single-valued map, $F$
is called calm at $p_0\in P$ if there exist positive constants $\delta$
and $\ell$ such that
\begin{equation}   \label{in:defcalmsinglevm}
  d(F(p),F(p_0))\le\ell d(p,p_0),\quad\forall
  p\in\cball{p_0}{\delta},
\end{equation}
with the value
$$
   \clm{F}{p_0}=\inf\{\ell>0:\ \exists\delta>0
  \hbox{ for which (\ref{in:defcalmsinglevm}) holds}\}
$$
providing the related bound. The reader should be aware of the fact
that such a definition is not consistent, in general, with Definition \ref{def:Lipsemcont}(ii),
in the sense that it does not coincide with the specialization of the
latter to the case of single-valued maps (see \cite[Remark 3.1]{Uder14}).

(v) Whenever $F$ is single-valued in a neighbourhood near $p_0$,
the Aubin property coincides with the classical Lipschitz continuity
near $p_0$, whose bound is denoted by $\lip{F}{p_0}$. In such an event,
one has
$$
  \Lip{F}{p_0}{F(p_0)}=\lip{F}{p_0}.
$$
\end{remark}

As an infinitesimal tool to be used in connection with the scalarization
approach, the (partial) strong slope with respect to
the variable $x$ of a function $\varphi:P\times X\longrightarrow\R
\cup\{\pm\infty\}$, defined on the product of metric spaces, at
$(p_0,x_0)\in P\times X$, will be considered, which is defined as
\begin{eqnarray*}
  \pastsl{\varphi}(p_0,x_0)=\left\{\begin{array}{ll}
  0, & \hbox{\ if $(p_0,x_0)$ is a local} \\
    & \hbox{\ minimizer of $\varphi$,}     \\
    & \\
  \displaystyle\limsup_{x\to x_0}{\varphi(p_0,x_0)-\varphi(p_0,x)\over d(x,x_0)}, &
  \hbox{\ otherwise.}
  \end{array}
  \right.
\end{eqnarray*}

The fundamental fact that will be employed for ensuring solution existence and for providing
useful error bounds via a strong slope condition is the following Basic Lemma.
For its proof in a metric space setting, which is based on the
Ekeland variational principle, the reader is referred to \cite[Lemma 2.3]{Uder21}.
Another (earlier) formulation in a more structured setting can be found in
\cite[Theorem 3.6.3]{BorZhu05}.

\begin{lemma}[Basic Lemma]     \label{lem:basicl}
Let $P$ and $X$ be metric spaces, let $(\bar p,\bar x)\in P\times X$
and let $\varphi:P\times X\longrightarrow [0,+\infty)$ be a given function.
Suppose that:
\begin{itemize}
  \item[(i)] $(X,d)$ is metrically complete;

  \item[(ii)] $\varphi(\bar p,\bar x)=0$;

  \item[(iii)] the function $p\mapsto\varphi(p,\bar x)$ is
  (u.s.) continuous at $\bar p$;

  \item[(iv)] $\exists r>0$ such that each function $x\mapsto\varphi(p,x)$
  is l.s.c. on $X$, for every $p\in\ball{\bar p}{r}$;

  \item [(v)] $\exists \delta>0$ such that
  $$
     \tau_\nabla(\delta)=\inf\{\pastsl{\varphi}(p,x):\ (p,x)\in [\ball{\bar p}{\delta}\times
     \ball{\bar x}{\delta}]\cap [\varphi>0]\}>0.
  $$
\end{itemize}
Then, there exist $\eta,\, \zeta>0$ such that:
\begin{itemize}
  \item[(t)] $[\varphi(p,\cdot)\le 0]\cap\cball{\bar x}{\eta}\ne\varnothing,
  \quad\forall p\in \cball{\bar p}{\zeta}$;

  \item[(tt)] the following estimate holds
\begin{equation}    \label{in:merbo}
  \dist{x}{[\varphi(p,\cdot)\le 0]}\le\tau_\nabla^{-1}(\delta)\varphi(p,x),\quad\forall(p,x)\in
  \cball{\bar p}{\zeta}\times \cball{\bar x}{\eta}.
\end{equation}
\end{itemize}

\end{lemma}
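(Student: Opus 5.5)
The plan is an Ekeland variational principle argument applied to $x\mapsto\varphi(p,x)$, with the constants chosen so that every point it produces stays inside the region $\ball{\bar p}{\delta}\times\ball{\bar x}{\delta}$ where the slope bound (v) holds. Write $\tau=\tau_\nabla(\delta)>0$.

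\textbf{Choice of constants.} Fix $\eta\in(0,\delta/2)$ with $\eta<r$. By (ii) and (iii) there is $\zeta\in(0,\min\{\delta,r\})$ such that
$$\varphi(p,\bar x)<\tau\eta/2 \quad\hbox{for all } p\in\cball{\bar p}{\zeta}.$$
The key smallness condition will be that, for $x\in\cball{\bar x}{\eta}$ with $\varphi(p,x)$ small, the quantity $\varphi(p,x)/\tau$ is less than $\delta/2$. Then any Ekeland point obtained from $x$ lies within distance $\delta/2+\eta<\delta$ of $\bar x$. I may also shrink $\eta$ further for (tt).

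\textbf{Proof of (tt).} Take $(p,x)\in\cball{\bar p}{\zeta}\times\cball{\bar x}{\eta}$ with $\varphi(p,x)>0$. It suffices to show that $[\varphi(p,\cdot)\le 0]$ contains a point within $\varphi(p,x)/\tau'$ of $x$ for every $\tau'<\tau$.
\begin{itemize}
\item By (i) and (iv), the function $\varphi(p,\cdot)$ is l.s.c., bounded below by $0$, on a complete space. Apply Ekeland's principle with $\epsilon=\varphi(p,x)$ and $\lambda=\varphi(p,x)/\tau'$, i.e. with slope constant $\tau'$.
\item This gives $u$ with $\varphi(p,u)\le\varphi(p,x)$ and $d(u,x)\le\varphi(p,x)/\tau'$.
\item Moreover $u$ minimizes $\varphi(p,\cdot)+\tau' d(\cdot,u)$, which forces $\pastsl{\varphi}(p,u)\le\tau'<\tau$.
\item If $\varphi(p,u)>0$ and $(p,u)\in\ball{\bar p}{\delta}\times\ball{\bar x}{\delta}$, this contradicts (v). So $\varphi(p,u)=0$, as required.
\end{itemize}

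\textbf{Main obstacle: the localization.} To get $d(u,\bar x)<\delta$ I need $\varphi(p,x)/\tau'<\delta-\eta$, but $\varphi(p,x)$ need not be small at an arbitrary $x\in\cball{\bar x}{\eta}$. I would split into two cases.
\begin{itemize}
\item If $\varphi(p,x)\ge\tau'\cdot$(some fixed threshold), I use the point $u_p$ found in (t) below. Since $d(x,u_p)\le 2\eta$, choosing the threshold comparable to $2\eta$ and shrinking $\eta$ relative to $\delta$ makes $\dist{x}{[\varphi(p,\cdot)\le 0]}\le 2\eta\le\varphi(p,x)/\tau'$ automatically.
\item Otherwise $\varphi(p,x)$ is small, and the Ekeland argument above stays inside the good region.
\end{itemize}
Concretely, I take $\eta<\delta/4$ and treat the cases $\varphi(p,x)/\tau'\ge 2\eta$ and $\varphi(p,x)/\tau'<2\eta<\delta-\eta$ respectively. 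Letting $\tau'\uparrow\tau$ then gives \eqref{in:merbo}.

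\textbf{Proof of (t).} Apply the same Ekeland argument at $x=\bar x$. By the choice of $\zeta$, $\varphi(p,\bar x)/\tau'<\eta$ for $\tau'$ close to $\tau$. This yields $u_p$ with $\varphi(p,u_p)=0$ and $d(u_p,\bar x)<\eta\le\delta$, so $u_p\in[\varphi(p,\cdot)\le0]\cap\cball{\bar x}{\eta}$. Hence (t) holds, and the point $u_p$ is available for the first case of (tt).
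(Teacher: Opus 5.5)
Your argument is correct and takes essentially the same route as the Ekeland-principle proof the paper cites. You apply Ekeland to $\varphi(p,\cdot)$ with slope constant $\tau'<\tau_\nabla(\delta)$, use condition (v) to force the Ekeland point to be a zero, and handle the localization in (tt) by falling back on the point $u_p$ from (t) when $\varphi(p,x)/\tau'\ge 2\eta$, so that everything stays inside $\ball{\bar p}{\delta}\times\ball{\bar x}{\delta}$; just fix $\eta<\delta/4$ before choosing $\zeta$ (which depends on $\eta$), and dispose separately of the trivial cases $\varphi(p,x)=0$ and $\tau_\nabla(\delta)=+\infty$.
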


The next lemma provides a sufficient condition for the continuity of
the function $p\mapsto\mf(p,\bar x)$ defined as in (\ref{eq:mfunctdef}),
which will be employed in the next section.

\begin{lemma}     \label{lem:contmf}
With reference to a family $(\SFP_p)$, let $(\bar p,\bar x)\in\graph\Solv$.
Suppose that:
\begin{itemize}
  \item[(i)]  $C:P\rightrightarrows\X$ is l.s.c. at $(\bar p,\bar x)$;

  \item[(ii)] $Q:P\rightrightarrows\Y$ is l.s.c. at $\left(\bar p,A(\bar p,\bar x)\right)$;

  \item[(iii)] $A(\cdot,\bar x):P\rightarrow\Y$ is continuous at $\bar p$.
\end{itemize}

\noindent Then, the function $p\mapsto\mf(p,\bar x)$ is continuous at $\bar p$.
\end{lemma}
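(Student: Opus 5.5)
Since $(\bar p,\bar x)\in\graph\Solv$, Proposition \ref{pro:scacharSolv} gives $\mf(\bar p,\bar x)=0$. Because $\mf\ge 0$, continuity of $p\mapsto\mf(p,\bar x)$ at $\bar p$ reduces to showing $\limsup_{p\to\bar p}\mf(p,\bar x)\le 0$. Equivalently, for every $\epsilon>0$ there is a neighbourhood of $\bar p$ on which both $\dist{\bar x}{C(p)}$ and $\dist{A(p,\bar x)}{Q(p)}$ are less than $\epsilon$. I will treat the two summands of (\ref{eq:mfunctdef}) separately.

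For the first summand, hypothesis (i) means, in the sense of \cite{Borw81} recalled in Remark \ref{rem:lscLiplsc}(i), that for every $\epsilon>0$ there is $\delta_\epsilon>0$ with $C(p)\cap\ball{\bar x}{\epsilon}\ne\varnothing$ for all $p\in\cball{\bar p}{\delta_\epsilon}$. Picking any $z_p$ in this intersection gives $\dist{\bar x}{C(p)}\le\|z_p-\bar x\|<\epsilon$.

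For the second summand, set $\bar y=A(\bar p,\bar x)\in Q(\bar p)$. The distance function is $1$-Lipschitz, so
$$
\dist{A(p,\bar x)}{Q(p)}\le \|A(p,\bar x)-\bar y\|+\dist{\bar y}{Q(p)}.
$$
By (iii), the first term is smaller than $\epsilon$ for $p$ close to $\bar p$. By (ii), exactly as for $C$, there is $\delta'_\epsilon>0$ such that $Q(p)\cap\ball{\bar y}{\epsilon}\ne\varnothing$ for $p\in\cball{\bar p}{\delta'_\epsilon}$, which makes the second term smaller than $\epsilon$.

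Taking the minimum of the three radii, we get $0\le\mf(p,\bar x)<3\epsilon$ on a ball around $\bar p$, which proves continuity. The only delicate step, and it is minor, is that $A(p,\bar x)$ moves with $p$. The lower semicontinuity of $Q$ is anchored at the fixed point $\bar y$, so it cannot be applied at $A(p,\bar x)$ directly; the triangle-type inequality above is what decouples the motion of the point from the motion of the set.
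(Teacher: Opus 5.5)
Your proof is correct and follows essentially the same route as the paper: you use $\mf(\bar p,\bar x)=0$ together with the nonnegativity of $\mf$, bound $\dist{A(p,\bar x)}{Q(p)}$ by $\|A(p,\bar x)-A(\bar p,\bar x)\|+\dist{A(\bar p,\bar x)}{Q(p)}$, and control the three resulting terms via (iii) and the lower semicontinuity of $Q$ and $C$. The only cosmetic difference is that the paper works with $\epsilon/3$ to obtain the bound $\epsilon$ directly, whereas you obtain $3\epsilon$.
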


\begin{proof}
Fix an arbitrary $\epsilon>0$. According to hypothesis (i), corresponding to
$\epsilon/3$ there exists $\delta_{C,\epsilon}>0$ such that
$$
  C(p)\cap\ball{\bar x}{\epsilon/3}\ne\varnothing,\quad\forall
  p\in\cball{\bar p}{\delta_{C,\epsilon}},
$$
and hence
\begin{equation}   \label{in:lemcontmf1}
  \dist{\bar x}{C(p)}\le\frac{\epsilon}{3},\quad\forall
  p\in\cball{\bar p}{\delta_{C,\epsilon}}.
\end{equation}
Similarly, by hypothesis (ii), corresponding to
$\epsilon/3$ there exists $\delta_{Q,\epsilon}>0$ such that
$$
  Q(p)\cap\ball{A(\bar p,\bar x)}{\epsilon/3}\ne\varnothing,\quad\forall
  p\in\cball{\bar p}{\delta_{Q,\epsilon}},
$$
and hence
\begin{equation}   \label{in:lemcontmf2}
  \dist{A(\bar p,\bar x)}{C(p)}\le\frac{\epsilon}{3},\quad\forall
  p\in\cball{\bar p}{\delta_{Q,\epsilon}}.
\end{equation}
According to hypothesis (iii), corresponding to
$\epsilon/3$ there exists $\delta_{A,\epsilon}>0$ such that
\begin{equation}   \label{in:lemcontmf3}
  \|A(p,\bar x)-A(\bar p,\bar x)\|\le\frac{\epsilon}{3},\quad\forall
  p\in\cball{\bar p}{\delta_{A,\epsilon}}.
\end{equation}
Thus, by taking $\delta_\epsilon=\min\{\delta_{C,\epsilon},\, \delta_{Q,\epsilon},\,
\delta_{A,\epsilon}\}$, as a consequence of inequalities (\ref{in:lemcontmf1}),
(\ref{in:lemcontmf2}), and (\ref{in:lemcontmf3}), one obtains
\begin{eqnarray*}
  \mf(p,\bar x) &=& \dist{A(p,\bar x)}{Q(p)}+\dist{\bar x}{C(p)} \\
   &\le& \|A(p,\bar x)-A(\bar p,\bar x)\|+\dist{A(\bar p,\bar x)}{Q(p)}+\dist{\bar x}{C(p)}  \\
   &\le& \frac{\epsilon}{3}+\frac{\epsilon}{3}+\frac{\epsilon}{3}=\mf(\bar p,\bar x)+\epsilon,
   \quad\forall   p\in\cball{\bar p}{\delta_\epsilon}.
\end{eqnarray*}
The thesis follows from the last inequality, as $\mf$ is nonnegative
by its definition.
\smartqed\qed
\end{proof}

\begin{remark}    \label{rem:Lipcontmf}
For the purposes of the present analysis it is useful
to notice that, as a consequence of the (Lipschitz) continuity of the
distance function from a nonempty closed set and of the assumption $(a_2)$,
each function $x\mapsto\mf(p,x)$ is (Lipschitz) continuous on $\X$,
for every $p\in P$.
\end{remark}

As a further tool for investigating the Aubin property of $\Solv$,
the following scalarization criterion will be exploited.

\begin{proposition}    \label{pro:scacriAubcont}
For any multifunction $F:P\rightrightarrows\X$ and $(p_0,x_0)\in\graph F$,
the following properties are equivalent:
\begin{itemize}
  \item[(i)] $F$ has the Aubin property around $(p_0,x_0)$;

  \item[(ii)] the scalar function $(p,x)\mapsto\dist{x}{F(p)}$
  is locally Lipschitz around $(p_0,x_0)$.
\end{itemize}
\end{proposition}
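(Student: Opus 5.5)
The plan is to prove the two implications separately. Throughout, $P\times\X$ carries the sum metric $d(p,p')+\|x-x'\|$; any equivalent product metric would only change constants. The implication (ii)$\Rightarrow$(i) is almost immediate. The implication (i)$\Rightarrow$(ii) needs a localization argument to ensure the points used in the Aubin inclusion stay inside $\cball{x_0}{\eta}$; that localization is the main obstacle.

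\emph{(ii)$\Rightarrow$(i).} Suppose there are $r>0$ and $L>0$ such that
$$
|\dist{x}{F(p)}-\dist{x'}{F(p')}|\le L\big(d(p,p')+\|x-x'\|\big)
$$
for all $(p,x),(p',x')\in\cball{p_0}{r}\times\cball{x_0}{r}$. Since $\dist{x_0}{F(p_0)}=0$, this inequality forces the distance function to be finite near $(p_0,x_0)$.
\begin{itemize}
\item Take $\delta=\eta=r$ and $\ell=L$.
\item Fix $p_1,p_2\in\cball{p_0}{\delta}$ and $x\in F(p_1)\cap\cball{x_0}{\eta}$.
\item Then $\dist{x}{F(p_2)}\le\dist{x}{F(p_1)}+L\,d(p_1,p_2)=L\,d(p_1,p_2)$.
\item By the definition $\cball{\Omega}{r}=[\dist{\cdot}{\Omega}\le r]$, this says exactly that $x\in\cball{F(p_2)}{\ell d(p_1,p_2)}$, which is (\ref{in:defAubin}).
\end{itemize}

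\emph{(i)$\Rightarrow$(ii).} Let $\delta,\eta,\ell$ be as in (\ref{in:defAubin}). Choose $\delta'\le\delta$ and $\eta'>0$ so small that $2\eta'+\ell\delta'<\eta$.

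First I show the distance function is finite and small near $(p_0,x_0)$. Apply (\ref{in:defAubin}) with $p_1=p_0$ and the point $x_0\in F(p_0)\cap\cball{x_0}{\eta}$. This gives $\dist{x_0}{F(p')}\le\ell\, d(p_0,p')\le\ell\delta'$ for every $p'\in\cball{p_0}{\delta'}$. Hence $\dist{x'}{F(p')}\le\eta'+\ell\delta'$ for every $x'\in\cball{x_0}{\eta'}$.

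Now fix $(p,x),(p',x')\in\cball{p_0}{\delta'}\times\cball{x_0}{\eta'}$ and $\epsilon>0$ with $2\eta'+\ell\delta'+\epsilon\le\eta$.
\begin{itemize}
\item Pick $z\in F(p')$ with $\|x'-z\|\le\dist{x'}{F(p')}+\epsilon$.
\item Then $\|z-x_0\|\le\|z-x'\|+\|x'-x_0\|\le 2\eta'+\ell\delta'+\epsilon\le\eta$, so $z\in F(p')\cap\cball{x_0}{\eta}$. This localization is precisely where the choice of $\delta',\eta'$ is used.
\item The Aubin inclusion (\ref{in:defAubin}), applied with $p_1=p'$ and $p_2=p$, yields $\dist{z}{F(p)}\le\ell\, d(p,p')$.
\item Consequently
$$
\dist{x}{F(p)}\le\|x-z\|+\dist{z}{F(p)}\le\|x-x'\|+\dist{x'}{F(p')}+\epsilon+\ell\, d(p,p').
$$
\end{itemize}
Letting $\epsilon\downarrow0$ and exchanging the roles of $(p,x)$ and $(p',x')$ gives
$$
|\dist{x}{F(p)}-\dist{x'}{F(p')}|\le\max\{1,\ell\}\big(d(p,p')+\|x-x'\|\big)
$$
on $\cball{p_0}{\delta'}\times\cball{x_0}{\eta'}$. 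This is the local Lipschitz property in (ii).
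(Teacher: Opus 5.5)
Your proof is correct, including the finiteness of $\dist{x}{F(p)}$ near $(p_0,x_0)$ and the localization in (i)$\Rightarrow$(ii), where the choice $2\eta'+\ell\delta'<\eta$ guarantees that the near-nearest point $z\in F(p')$ lies in $\cball{x_0}{\eta}$. The paper gives no proof of its own and defers to the results of Rockafellar and Mordukhovich, remarking that linear structure and completeness play no role; your argument is that same standard proof carried out with $P$ a metric space, and it confirms the remark.
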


This criterion was first established in a finite-dimensional setting in
\cite[Theorem 2.3]{Rock85}. Then, an extension to the case of multifunctions
acting between Banach spaces has been provided in \cite[Theorem 1.41]{Mord06}.
A perusal reveals that the linear structure as well as the completeness
of the involved spaces plays no role in the proof, so the same criterion
can be further extended to the setting considered in
Proposition \ref{pro:scacriAubcont}. Nevertheless, in Section \ref{Sect:4}
Proposition \ref{pro:scacriAubcont} will be applied with $P$ being a
normed space.


\subsection{Tools from convex analysis}

The regularity conditions, upon which the main results will be established,
are expressed in terms of some basic constructions of convex analysis,
dealing with the approximation of sets and functions. In what follows,
their definitions and the needed calculus rules are briefly recalled.
Given a convex set $\Omega\subseteq\X$ and $\bar x\in\Omega$, the set
$$
   \Ncone{\bar x}{\Omega}=\{x^*\in\X^*\ :\ \langle x^*,x-\bar x\rangle\le 0,
   \quad\forall x\in \Omega\}
$$
is the normal cone to $\Omega$ at $\bar x$. By its very definition, it is clear that
the normal cone is always a closed, convex cone in $\X^*$, containing $\nullv^*$. Moreover,
whenever $\bar x\in\inte \Omega$, one has $\Ncone{\bar x}{\Omega}=\{\nullv^*\}$.

Given a convex function $\varphi:\X\longrightarrow\R\cup\{\pm\infty\}$
and $\bar x\in\dom\varphi$, the (Moreau-Rockafellar) subdifferential of $\varphi$
at $\bar x$ is the set
$$
   \partial\varphi(\bar x)=\{x^*\in\X^*\ :\ \langle x^*,x-\bar x\rangle\le\varphi(x)
   -\varphi(\bar x),\quad\forall x\in\X\}.
$$
This derivative-like object is known to enjoy a rich apparatus of calculus.
Among the subdifferential calculus rules, the following ones will be employed in the sequel:

\begin{itemize}

\item[$\ $] sum rule: if $\varphi:\X\longrightarrow\R\cup\{\pm\infty\}$ and
$\vartheta:\X\longrightarrow\cup\{\pm\infty\}$ are convex functions, with at least one
of them being continuous at $\bar x\in\dom\varphi\cap\dom\vartheta$, then the following equality holds
\begin{equation}    \label{eq:subdsumr}
   \partial\left(\varphi+\vartheta\right)(\bar x)=
   \partial\varphi(\bar x)+\partial\vartheta(\bar x)
\end{equation}
(see \cite[Theorem 3.48]{MorNam22}).

\item[$\ $] chain rule: if $\Lambda\in\Lin(\X,\Y)$ and $\varphi:\Y\longrightarrow\R\cup\{\pm\infty\}$
is a convex function, continuous at $\Lambda(\bar x)$, with $\bar x\in\X$ , then it holds
\begin{equation}   \label{eq:subdchainr}
  \partial(\varphi\circ\Lambda)(\bar x)=\Lambda^*\left(\partial
  \varphi(\Lambda\bar x)\right)
\end{equation}
(see \cite[Theorem 3.55]{MorNam22}).
\end{itemize}

Let $\Omega\subseteq\X$ be a nonempty, closed, convex set and $\bar x
\in\X$.  In this case, the function $x\mapsto\dist{x}{\Omega}$ turns out to be
(Lipschitz) continuous and convex on $\X$, with $\dom\dist{\cdot}{\Omega}=\X$.
The generalized differentiation of this function leads to a connection between the
two above constructions:
\begin{equation}   \label{eq:subddist}
  \partial\dist{\cdot}{\Omega}(\bar x)=\left\{
  \begin{array}{ll}
    \Ncone{\bar x}{\Omega}\cap\Uball^*, & \quad\hbox{ if } \bar x\in\Omega, \\
    \\
    \Ncone{\bar x}{\cball{\Omega}{d_\Omega}}\cap\Usfer^*,
    & \quad\hbox{ if } \bar x\not\in\Omega,
  \end{array}\right.
\end{equation}
where $d_\Omega=\dist{\bar x}{\Omega}$
(see \cite[Proposition 3.77(a), Theorem 3.82]{MorNam22}).

A further element to be recalled enables to estimate the strong slope
of a convex function by means of its subdifferential: if
$\varphi:\X\longrightarrow\R\cup\{\pm\infty\}$ is a convex
function and $\bar x\in\dom\varphi$, then
\begin{equation}     \label{eq:stslsubdifest}
  \stsl{\varphi}(\bar x)=\inf\{\|x^*\|:\ x^*\in\partial\varphi(\bar x)\}
  =\dist{\nullv^*}{\partial\varphi(\bar x)}.
\end{equation}
(see \cite[Theorem 5(i)]{FaHeKrOu10}).

By combining the recalled subdifferential estimate with \cite[Theorem 2.8]{AzeCor06},
on account of the metric completeness of $\X$ (assumption $(a_0)$),
it is possible to establish the following global error bound result,
which will be employed in the sequel:

\begin{lemma}    \label{lem:sovebconvex}
Let $\varphi:\X\longrightarrow\R$ be a convex function continuous on
the Banach space $\X$.
Assume that $[\varphi>0]\ne\varnothing$ and that
\begin{equation}    \label{def:ebsolvconvex}
      \tau_\partial=\inf_{x\in [\varphi>0]}\dist{\nullv^*}{\partial\varphi(x)}
  =\inf\{\|x^*\|\ :\ x^*\in\partial\varphi(x),\quad x\in [\varphi>0]\}>0.
\end{equation}
Then, it is $[\varphi\le 0]\ne\varnothing$ and
$$
  \dist{x}{[\varphi\le 0]}\le {\varphi(x)\over\tau_\partial},
  \quad\forall x\in [\varphi>0].
$$
\end{lemma}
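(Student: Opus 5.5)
The plan is to combine the subdifferential representation of the strong slope (\ref{eq:stslsubdifest}) with the global error bound criterion of \cite[Theorem 2.8]{AzeCor06}. That criterion says, roughly, that for a l.s.c. function on a complete metric space, a uniform lower bound on the strong slope over the strict superlevel set gives nonemptiness of the sublevel set together with the corresponding linear error bound.

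First, fix $x\in[\varphi>0]$. Since $\varphi$ is convex and finite everywhere, $x\in\dom\varphi$, so (\ref{eq:stslsubdifest}) applies and gives
$$
\stsl{\varphi}(x)=\dist{\nullv^*}{\partial\varphi(x)}\ge\tau_\partial .
$$
One should check that the convention in the definition of the strong slope (value $0$ at local minimizers) does not clash with this estimate. It does not: if $x$ were a local minimizer of $\varphi$, then by convexity it would be a global minimizer, so $\nullv^*\in\partial\varphi(x)$. That would give $\dist{\nullv^*}{\partial\varphi(x)}=0<\tau_\partial$, contradicting (\ref{def:ebsolvconvex}). Hence $\inf_{x\in[\varphi>0]}\stsl{\varphi}(x)\ge\tau_\partial>0$.

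Second, I would run the Ekeland argument directly rather than quote the cited theorem.
\begin{itemize}
\item Fix $x_0\in[\varphi>0]$ and $\tau\in(0,\tau_\partial)$.
\item Apply Ekeland's variational principle to $\varphi^+=\max\{\varphi,0\}$. This function is continuous and bounded below on the Banach space $\X$ (assumption $(a_0)$). Use $\epsilon=\varphi(x_0)$ and $\lambda=\varphi(x_0)/\tau$.
\item This yields $u$ with $\|u-x_0\|\le\varphi(x_0)/\tau$, and $u$ minimizes $\varphi^+(\cdot)+\tau\|\cdot-u\|$.
\item Suppose $\varphi(u)>0$. By continuity, $\varphi^+=\varphi$ near $u$, so $\stsl{\varphi}(u)\le\tau<\tau_\partial$, which contradicts the first step.
\item Hence $\varphi(u)\le0$. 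This shows $[\varphi\le0]\ne\varnothing$ and $\dist{x_0}{[\varphi\le0]}\le\varphi(x_0)/\tau$.
\end{itemize}
Letting $\tau\uparrow\tau_\partial$ gives the stated bound.

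The only delicate point is that strong-slope convention, which convexity resolves as shown in the first step.
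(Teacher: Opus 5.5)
Your proposal is correct and takes essentially the paper's route. The paper obtains the lemma by combining the slope representation (\ref{eq:stslsubdifest}) with the global error bound criterion of \cite[Theorem 2.8]{AzeCor06}, and your Ekeland argument applied to $\varphi^+=\max\{\varphi,0\}$ (together with the check that $\tau_\partial>0$ excludes local minimizers in $[\varphi>0]$ and the passage $\tau\uparrow\tau_\partial<+\infty$) is just the standard proof of that cited criterion written out explicitly.
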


Recall that a set-valued map between normed spaces $F:\X\rightrightarrows\Y$
is said to be convex if $\graph F$ is a convex subset of $\X\times\Y$.

\begin{lemma}    \label{lem:convmfunct}
If $F:\X\rightrightarrows\Y$ is convex, then the function $(x,y)\mapsto\dist{y}{F(x)}$
is convex on $\X\times\Y$.
\end{lemma}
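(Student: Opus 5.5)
The plan is to verify the convexity inequality directly from the definition, using the convexity of $\graph F$ and of the norm of $\Y$. I first fix the convention on the values: the function $\psi(x,y)=\dist{y}{F(x)}$ takes values in $[0,+\infty]$, and $\psi(x,y)=+\infty$ exactly when $F(x)=\varnothing$. Convexity of an extended-valued function is understood as the usual inequality with the rule that it holds trivially whenever its right-hand side is $+\infty$. Equivalently, I could show that the epigraph of $\psi$ is convex, but the direct inequality is simpler.

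Take $(x_1,y_1),(x_2,y_2)\in\X\times\Y$ and $t\in[0,1]$. The goal is
$$
\psi(tx_1+(1-t)x_2,\,ty_1+(1-t)y_2)\le t\psi(x_1,y_1)+(1-t)\psi(x_2,y_2).
$$
If $F(x_1)$ or $F(x_2)$ is empty, the right-hand side is $+\infty$ and there is nothing to prove. The cases $t\in\{0,1\}$ are trivial as well. So I assume both values are nonempty and $t\in(0,1)$.

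Fix $\epsilon>0$. By the definition of distance, I pick $z_i\in F(x_i)$ with $\|y_i-z_i\|\le\psi(x_i,y_i)+\epsilon$ for $i=1,2$. Since $(x_i,z_i)\in\graph F$ and $\graph F$ is convex, the point $(tx_1+(1-t)x_2,\,tz_1+(1-t)z_2)$ also lies in $\graph F$. Hence $tz_1+(1-t)z_2\in F(tx_1+(1-t)x_2)$, and therefore
$$
\psi(tx_1+(1-t)x_2,\,ty_1+(1-t)y_2)\le\|t(y_1-z_1)+(1-t)(y_2-z_2)\|\le t\psi(x_1,y_1)+(1-t)\psi(x_2,y_2)+\epsilon,
$$
where the second inequality is the triangle inequality together with absolute homogeneity of the norm. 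Letting $\epsilon\downarrow0$ gives the claim.

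There is no real obstacle here. The only point that needs care is the handling of empty values, which the convention $\dist{\cdot}{\varnothing}=+\infty$ fixed in the Notations resolves. I would also note that when $\dom F$ is all of $\X$ the function is finite-valued, and this is the setting where Lemma \ref{lem:sovebconvex} would be applied later.
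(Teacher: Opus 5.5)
Your proof is correct, including the handling of empty values. The paper gives no proof of its own here: it only points to a finite-dimensional lemma in an earlier cited work and says that argument carries over unchanged to normed spaces. Your direct argument (near-best points $z_i\in F(x_i)$, convexity of $\graph F$, triangle inequality) is the standard proof of this fact.

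One small correction to your closing aside concerns where the lemma is used. It is used, together with Proposition \ref{pro:convSolvmap}, in the proof of Theorem \ref{thm:Aubproconv} for $(p,x)\mapsto\dist{x}{\Solv(p)}$, not alongside Lemma \ref{lem:sovebconvex}. There $\Solv$ may take empty values away from $\bar p$, so your extended-valued treatment is exactly what that application needs.
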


For a finite-dimensional setting, its proof can be found e.g. in \cite[Lemma 2.7]{Uder23}.
Nonetheless, the arguments employed there can be reproduced without any change in a normed
space setting.

\vskip.5cm


\section{Lipschitzian properties of $\Solv$ under metric perturbations}   \label{Sect:3}

\subsection{The dual regularity condition, local solvability and error bounds}

Given any $\delta>0$ and any pair $(p,x)\in P\times\X$, define by convenience $\mf_{A,Q}(p,x)=\dist{A(p,x)}{Q(p)}$,
$\mf_C(p,x)=\dist{x}{C(p)}$, $d_{A,Q}=\dist{A(p,x)}{Q(p)}$ and $d_C=\dist{x}{C(p)}$.
On the base of these notations, let us introduce the following quantities
\begin{eqnarray}   \label{eq:deftauAQ}
    \tau_{A,Q}(\delta)=\inf\bigg\{\|x^*\|:\ & x^*\in A(p,\cdot)^*\left(\Ncone{A(p,x)}{\cball{Q(p)}{d_{A,Q}})}\cap\Usfer^*\right) \nonumber \\
     & +[\Ncone{x}{\cball{C(p)}{d_C}}\cap\Uball^*],  \nonumber \\
    & (p,x)\in [\mf_{A,Q}>0]\cap\left[\ball{\bar p}{\delta}\times\ball{\bar x}{\delta}\right]\bigg\}
\end{eqnarray}
and
\begin{eqnarray}   \label{eq:deftauC}
   \tau_C(\delta)=\inf\bigg\{ & \|x^*\|:\  x^*\in A(p,\cdot)^*\left(\Ncone{A(p,x)}{Q(p)}\cap\Uball^*\right)
      +[\Ncone{x}{\cball{C(p)}{d_C}}\cap\Usfer^*],  \nonumber \\
    & (p,x)\in [\mf_{A,Q}=0]\cap[\mf_{C}>0]\cap\left[\ball{\bar p}{\delta}\times\ball{\bar x}{\delta}\right]\bigg\}.
\end{eqnarray}
Notice that in the formula (\ref{eq:deftauAQ}), whenever $x\in C(p)$, so $d_C=0$,
then it results in $\Ncone{x}{\cball{C(p)}{d_C}}=\Ncone{x}{C(p)}$.
In what follows, some conditions for the quantitative semicontinuity
properties of $\Solv$ are established under a qualification of the
behaviour of a family $(\SFP_p)$ near $(\bar p,\bar x)$, which is expressed
in terms of the constructions introduced above. More precisely,
a family $(\SFP_p)$ is said to satisfy the {\it dual regularity condition}
(for short, \DRC) near $(\bar p,\bar x) \in\graph\Solv$  if
$$
  \exists\delta>0:\ \tau(\delta)=\min\{\tau_{A,Q}(\delta),\,
  \tau_C(\delta)\}>0.   \leqno(\DRC)
$$

\vskip.5cm

\begin{theorem}     \label{thm:Solverbo}
With reference to a family $(\SFP_p)$, let $(\bar p,\bar x)\in
\graph\Solv$. Suppose that:
\begin{itemize}
  \item[(i)]  $C:P\rightrightarrows\X$ is l.s.c. at $(\bar p,\bar x)$;

  \item[(ii)] $Q:P\rightrightarrows\Y$ is l.s.c. at $\left(\bar p,A(\bar p,\bar x)\right)$;

  \item[(iii)] $A(\cdot,\bar x):P\rightarrow\Y$ is continuous at $\bar p$;

  \item[(iv)] the condition $(\DRC)$ holds.
\end{itemize}
Then, there exist $\eta,\, \zeta>0$ such that:
\begin{itemize}
  \item[(t)] $\Solv(p)\cap\cball{\bar x}{\eta}\ne\varnothing,
  \quad\forall p\in\cball{\bar p}{\zeta}$;

  \item[(tt)] the following estimate holds
\begin{equation}    \label{in:serbo}
  \dist{x}{\Solv(p)}\le\tau(\delta)^{-1}\mf(p,x),\quad\forall (p,x)\in
  \cball{\bar p}{\zeta}\times\cball{\bar x}{\eta}.
\end{equation}
\end{itemize}
\end{theorem}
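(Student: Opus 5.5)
The plan is to apply the Basic Lemma (Lemma \ref{lem:basicl}) to the merit function $\varphi=\mf$ defined in (\ref{eq:mfunctdef}), with $X=\X$ and the given $(\bar p,\bar x)$. Proposition \ref{pro:scacharSolv} gives $[\mf(p,\cdot)\le 0]=\Solv(p)$, so conclusions (t) and (tt) of the Basic Lemma become exactly (t) and (tt) of the statement. The only change is that $\tau_\nabla^{-1}(\delta)$ must be replaced by $\tau(\delta)^{-1}$, which follows once we show $\tau_\nabla(\delta)\ge\tau(\delta)$.

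First I check hypotheses (i)--(iv) of the Basic Lemma. Completeness of $\X$ is assumption $(a_0)$. Next, $\mf(\bar p,\bar x)=0$ because $\bar x\in\Solv(\bar p)$. Continuity of $p\mapsto\mf(p,\bar x)$ at $\bar p$ is Lemma \ref{lem:contmf}, which applies under hypotheses (i)--(iii). Lower semicontinuity, indeed Lipschitz continuity, of each $\mf(p,\cdot)$ is Remark \ref{rem:Lipcontmf}.

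The main step is the slope bound: for every $(p,x)\in[\ball{\bar p}{\delta}\times\ball{\bar x}{\delta}]\cap[\mf>0]$ I must show $\pastsl{\mf}(p,x)\ge\tau(\delta)$. Fix $p$. The function $\mf(p,\cdot)=\dist{A(p,\cdot)}{Q(p)}+\dist{\cdot}{C(p)}$ is convex and continuous on $\X$, so by (\ref{eq:stslsubdifest}) its strong slope at $x$ equals $\dist{\nullv^*}{\partial\mf(p,\cdot)(x)}$. Since $(p,x)$ is not a minimizer when $\mf(p,x)>0$, this slope coincides with the partial strong slope. Both terms are continuous, so the sum rule (\ref{eq:subdsumr}) and the chain rule (\ref{eq:subdchainr}) give
$$
\partial\mf(p,\cdot)(x)=A(p,\cdot)^*\big(\partial\dist{\cdot}{Q(p)}(A(p,x))\big)+\partial\dist{\cdot}{C(p)}(x).
$$
Each subdifferential is then computed with (\ref{eq:subddist}), and I split into two cases.

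If $\mf_{A,Q}(p,x)>0$, the first term is $A(p,\cdot)^*(\Ncone{A(p,x)}{\cball{Q(p)}{d_{A,Q}}}\cap\Usfer^*)$. The second term is $\Ncone{x}{\cball{C(p)}{d_C}}\cap\Uball^*$: if $x\in C(p)$ this is the normal cone to $C(p)$ intersected with $\Uball^*$, and if $x\notin C(p)$ it is the unit-sphere version, which is contained in the $\Uball^*$ version. Hence $\partial\mf(p,\cdot)(x)$ is a subset of the set appearing in (\ref{eq:deftauAQ}), and the norm of every element is at least $\tau_{A,Q}(\delta)$.

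If $\mf_{A,Q}(p,x)=0$, then $\mf>0$ forces $\mf_C(p,x)>0$. The subdifferential is then contained in the set appearing in (\ref{eq:deftauC}), so every norm is at least $\tau_C(\delta)$.

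In both cases the slope is at least $\tau(\delta)>0$ by (DRC). This gives hypothesis (v) of the Basic Lemma with $\tau_\nabla(\delta)\ge\tau(\delta)$, so $\tau_\nabla^{-1}(\delta)\le\tau(\delta)^{-1}$, which yields (\ref{in:serbo}).

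The main obstacle I expect is the identification of $\pastsl{\mf}(p,x)$ with the slope of the convex function $\mf(p,\cdot)$ and the validity of the calculus rules. Both rest only on continuity of the distance functions, which holds here. A secondary point is the case $x\notin C(p)$ in the first case, where I need the inclusion $\Usfer^*\subset\Uball^*$ so that the set in (\ref{eq:deftauAQ}), defined with $\Uball^*$, still covers it.
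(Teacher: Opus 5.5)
Your proposal is correct and follows the paper's proof essentially step for step. You apply the Basic Lemma to $\mf$, check hypotheses (i)--(iv) via $(a_0)$, Lemma \ref{lem:contmf} and Remark \ref{rem:Lipcontmf}, and get $\tau_\nabla(\delta)\ge\tau(\delta)$ from (\ref{eq:stslsubdifest}), the sum and chain rules and (\ref{eq:subddist}), split into the cases $\mf_{A,Q}(p,x)>0$ and $\mf_{A,Q}(p,x)=0<\mf_C(p,x)$.

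One harmless slip: $\mf(p,x)>0$ does not by itself prevent $x$ from being a local minimizer of $\mf(p,\cdot)$, which can happen when $\Solv(p)=\varnothing$. You do not need that claim, though, since (\ref{eq:stslsubdifest}) also holds at minimizers, where both sides vanish, and $(\DRC)$ excludes such points in the $\delta$-neighbourhood anyway.
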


\begin{proof}
In the light of the scalar characterization of $\Solv$ provided by
Proposition \ref{pro:scacharSolv}, both the assertions in the thesis
follow from Lemma \ref{lem:basicl} (the Basic Lemma).
Thus, it suffices to show that all the hypotheses of Lemma \ref{lem:basicl}
are satisfied, upon the current set of assumptions.
To this aim, observe that assumption $(a_0)$ ensures the metric
completeness of the space $(\X,\|\cdot\|)$.
The fact that $(\bar p,\bar x)\in\graph\Solv$ implies that $\mf(\bar p,\bar x)=0$.
By virtue of hypotheses (i)-(iii), Lemma \ref{lem:contmf} allows to state that
the function $p\mapsto\mf(p,\bar x)$ is continuous at $\bar p$.
Moreover, as observed in Remark \ref{rem:Lipcontmf}, each function
$x\mapsto\mf(p,x)$ is (Lipschitz) continuous, and hence, l.s.c. on $\X$,
for every $p\in\ball{\bar p}{r}$, with any radius $r>0$.
So, it remains to check the validity of the hypothesis (v) in Lemma \ref{lem:basicl}.
It is at this point that the condition $(\DRC)$ comes into play.
Take an arbitrary $(p,x)\in [\mf>0]\cap\left[\ball{\bar p}{\delta}\times
\ball{\bar x}{\delta}\right]$, where $\delta>0$ is as in the assumption (iv).

Consider first the case $\dist{A(p,x)}{Q(p)}>0$.
Notice that, for every $p\in P$, the function $x\mapsto\mf(p,x)$ is convex
and real-valued on $\X$. Thus, in calculating $\pastsl{\mf}(p,x)$,
which is a derivative-like object dealing with variations with
respect to the variable $x$ only, one can take profit of convexity.
By taking into account the subdifferential
estimate of the strong slope (\ref{eq:stslsubdifest}) and the subdifferential
sum rule (\ref{eq:subdsumr}), one obtains
\begin{eqnarray}   \label{eq:stsldist0subdifmf}
  \pastsl{\mf}(p,x) &=& \dist{\nullv^*}{\partial\mf(p,\cdot)(x)} \\
   &=& \dist{\nullv^*}{\partial\dist{A(p,\cdot)}{Q(p)}(x)+   \nonumber
   \partial\dist{\cdot}{C(p)}(x)}.
\end{eqnarray}
Since $A(p,x)\not\in Q(p)$, according to the second case in formula
(\ref{eq:subddist}) and the chain rule for subdifferential (\ref{eq:subdchainr}),
one can write
\begin{eqnarray*}
  \partial\dist{A(p,\cdot)}{Q(p)}(x) &=& A(p,\cdot)^*(\partial\dist{\cdot}{Q(p)}(A(p,x))) \\
   &=& A(p,\cdot)^*\left(\Ncone{A(p,x)}{\cball{Q(p)}{d_{A,Q}})}\cap\Usfer^*\right).
\end{eqnarray*}
On the other hand, if $x\in C(p)$, according to the first case in formula
(\ref{eq:subddist}), one has
$$
  \partial\dist{\cdot}{C(p)}(x)=\Ncone{x}{C(p)}\cap\Uball^*=
  \Ncone{x}{\cball{C(p)}{d_C}}\cap\Uball^*.
$$
Otherwise, if $x\not\in C(p)$, now $d_C>0$ so the second case in  formula
(\ref{eq:subddist}) gives
$$
  \partial\dist{\cdot}{C(p)}(x)=\Ncone{x}{\cball{C(p)}{d_C}}\cap\Usfer^*
  \subseteq\Ncone{x}{\cball{C(p)}{d_C}}\cap\Uball^*.
$$
Therefore, whatever is the case, the following inclusion holds
$$
  \partial\mf(p,\cdot)(x)\subseteq
  A(p,\cdot)^*\left(\Ncone{A(p,x)}{\cball{Q(p)}{d_{A,Q}})}\cap\Usfer^*\right)
  +\Ncone{x}{\cball{C(p)}{d_C}}\cap\Uball^*.
$$
From this inclusion, by recalling the estimate in (\ref{eq:stsldist0subdifmf})
and hypothesis (iv), it follows
\begin{eqnarray}
     \pastsl{\mf}& &  \hskip-0.2cm (p,x) \nonumber  \\
    & &\hskip-0.2cm\ge \dist{\nullv^*}{A(p,\cdot)^*\left(\Ncone{A(p,x)}{\cball{Q(p)}{d_{A,Q}}}\cap
    \Usfer^*\right)+\Ncone{x}{\cball{C(p)}{d_C}}\cap\Uball^*} \nonumber \\
    & & \hskip-0.2cm\ge \tau_{A,Q}(\delta)>0.   \label{in:pastslAQ}
\end{eqnarray}

Let us consider now the case $\dist{A(p,x)}{Q(p)}=0$. In such an event, it must be $\dist{x}{C(p)}>0$.
In other words, $(p,x)\in[\mf_{A,Q}=0]\cap[\mf_{C}>0]$.
By employing the same argument as in the previous case, with the difference that
this time one has $\partial\dist{A(p,\cdot)}{Q(p)}(x)=\Ncone{A(p,x)}{Q(p)}\cap\Uball^*$,
in the present case one obtains
\begin{eqnarray}
   \pastsl{\mf}(p,x)
    & \ge & \dist{\nullv^*}{A(p,\cdot)^*\left(\Ncone{A(p,x)}{Q(p)}\cap
    \Uball^*\right)+\Ncone{x}{\cball{C(p)}{d_C}}\cap\Usfer^*} \nonumber \\
   & &\hskip-0.2cm\ge \tau_{C}(\delta)>0.    \label{in:pastslC}
\end{eqnarray}
By arbitrariness of $(p,x)\in [\mf>0]\cap\left[\ball{\bar p}{\delta}\times
\ball{\bar x}{\delta}\right]$, the inequalities (\ref{in:pastslAQ}) and (\ref{in:pastslC})
enable one to conclude that $\tau_\nabla(\delta)\ge\tau(\delta)>0$,
thereby showing the validity of hypothesis (v)  in Lemma \ref{lem:basicl}.
Consequently, from the estimate in (\ref{in:merbo}) it is possible to
get immediately the estimate in (\ref{in:serbo}).
This completes the proof.
\smartqed\qed
\end{proof}

Before exploring those consequences of Theorem \ref{thm:Solverbo}, which are relevant
to the present analysis, some comment on its statement is due.
Under proper conditions, assertion (t) guarantees, as a qualitative result, solution existence for
every $(\SFP_p)$, provided that $p$ is close enough to $\bar p$, with
at least some solution of $(\SFP_p)$ lying not far from the nominal solution
$\bar x$ to $(\SFP_{\bar p})$. As a quantitative result, assertion (tt)
provides an error bound estimate for $(\SFP_p)$, which is valid all around
$(\bar p,\bar x)$, with the same constant $\tau(\delta)$.
A consequence of Theorem \ref{thm:Solverbo}, which is useful in the context
of approximated solution methods for $(\SFP)$, states that, under the described
assumptions, any solution to a given problem can be reached as a limit
of certain solution to perturbed problems.

\begin{corollary}
Let $(\bar p,\bar x)\in\graph\Solv$. Under the hypotheses of Theorem \ref{thm:Solverbo},
for every sequence $(p_n)_{n\in\N}$ in $P$, with $p_n\rightarrow\bar p$ as $n\to\infty$,
there exists a sequence $(x_n)_{n\in\N}$, with $x_n\in\Solv(p_n)$, converging to
$\bar x$.
\end{corollary}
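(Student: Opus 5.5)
The plan is to combine assertion (tt) of Theorem \ref{thm:Solverbo} with the continuity of $p\mapsto\mf(p,\bar x)$ at $\bar p$ given by Lemma \ref{lem:contmf}. Evaluating the error bound (\ref{in:serbo}) at the nominal point $x=\bar x$ should show that $\dist{\bar x}{\Solv(p)}$ goes to zero as $p\to\bar p$. Approximate minimizers in the distance then furnish the required sequence.

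First, since the hypotheses of Theorem \ref{thm:Solverbo} hold, take $\eta,\zeta>0$ and $\delta$ as in its thesis. Given $p_n\to\bar p$, there is $n_0$ with $p_n\in\cball{\bar p}{\zeta}$ for all $n\ge n_0$. For these indices, assertion (t) gives $\Solv(p_n)\ne\varnothing$. Applying (\ref{in:serbo}) with $(p,x)=(p_n,\bar x)$, which is admissible because $\bar x\in\cball{\bar x}{\eta}$, yields
$$
  \dist{\bar x}{\Solv(p_n)}\le\tau(\delta)^{-1}\mf(p_n,\bar x).
$$
Hypotheses (i)--(iii) are exactly those of Lemma \ref{lem:contmf}, so $\mf(p_n,\bar x)\to\mf(\bar p,\bar x)=0$. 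Hence $\dist{\bar x}{\Solv(p_n)}\to 0$.

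Next, for each $n\ge n_0$ I would pick $x_n\in\Solv(p_n)$ with
$$
  \|x_n-\bar x\|\le\dist{\bar x}{\Solv(p_n)}+\frac1n.
$$
This is possible because the set is nonempty and the distance is finite. It follows that $x_n\to\bar x$. For the finitely many indices $n<n_0$ the statement requires $x_n\in\Solv(p_n)$, but $\Solv(p_n)$ may be empty there. The natural reading is that the sequence is defined for all sufficiently large $n$, equivalently after discarding finitely many terms, and I would state this explicitly.

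The main delicate point is this handling of the initial terms; the analytic content is entirely contained in (\ref{in:serbo}) plus Lemma \ref{lem:contmf}. Note that projections need not exist in a general Banach space, which is why the $1/n$-approximate choice is used rather than a nearest point.
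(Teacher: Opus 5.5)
Your proposal is correct and follows the paper's proof essentially verbatim. Both evaluate (\ref{in:serbo}) at $x=\bar x$, use the continuity of $p\mapsto\mf(p,\bar x)$ at $\bar p$ (Lemma \ref{lem:contmf}) together with $\mf(\bar p,\bar x)=0$, and pick $x_n\in\Solv(p_n)$ within $1/n$ of the infimum. Your explicit discarding of the finitely many indices with $p_n\notin\cball{\bar p}{\zeta}$ just spells out the paper's ``without loss of generality'' step.
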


\begin{proof}
It suffices to observe that the inequality in (\ref{in:serbo}), by assuming
without loss of generality that $p_n\in\cball{\bar p}{\zeta}$, allows to write
$$
  \dist{\bar x}{\Solv(p_n)}\le\tau(\delta)^{-1}\mf(p_n,\bar x),
  \quad\forall n\in\N.
$$
By taking into account that the function
$p\mapsto\mf(p,\bar x)$ is continuous at $\bar p$, as argued in the
proof of Theorem \ref{thm:Solverbo}, this inequality yields the existence of
a sequence $(x_n)_{n\in\N}$, such that $x_n\in\Solv(p_n)$ and
$$
  \|x_n-\bar x\|\le\tau(\delta)^{-1}\mf(p_n,\bar x)+\frac{1}{n}\
  \underset{n\to\infty}{\longrightarrow}\ \tau(\delta)^{-1}\mf(\bar p,\bar x)=0.
$$
\smartqed\qed
\end{proof}

\begin{remark}
Whenever $\X$ and $\Y$ are Hilbert spaces (in particular,
when they are finite-dimensional Euclidean spaces) the
formulation of $\tau_{A,Q}(\delta)$ and $\tau_{C}(\delta)$ can be
simplified. Indeed, in such a setting formula
(\ref{eq:subddist}) becomes
$$
  \partial\dist{\cdot}{\Omega}(\bar x)=\left\{
  \begin{array}{ll}
    \Ncone{\bar x}{\Omega}\cap\Uball  & \quad\hbox{ if } \bar x\in\Omega, \\
    \\
    \left\{\displaystyle{\bar x-\Proj{\bar x}{\Omega}\over \|\bar x-\Proj{\bar x}{\Omega}\|}\right\}
    & \quad\hbox{ if } \bar x\not\in\Omega,
  \end{array}\right.
$$
where $\Proj{\bar x}{\Omega}$ denotes the (uniquely defined) metric
projection of $\bar x$ onto $\Omega$ (see \cite[Corollary 3.79]{MorNam22}).
This fact yields
\begin{eqnarray*}
    \tau_{A,Q}(\delta)=\inf\bigg\{\|v\|:\ & v\in A(p,\cdot)^*\left({A(p,x)-\Proj{A(p,x)}{Q(p)}\over \|A(p,x)-\Proj{A(p,x)}{Q(p)}\|}\right)
      +[\Ncone{x}{\cball{C(p)}{d_C}}\cap\Uball],  \\
    & (p,x)\in [\mf_{A,Q}>0]\cap\left[\ball{\bar p}{\delta}\times\ball{\bar x}{\delta}\right]\bigg\}
\end{eqnarray*}
and
\begin{eqnarray*}
    \tau_{C}(\delta)=\inf\bigg\{\|v\|:\ & v\in A(p,\cdot)^*\left(\Ncone{A(p,x)}{Q(p)}\cap\Uball\right)
      +\left\{{x-\Proj{x}{C(p)}\over \|x-\Proj{x}{C(p)}\|}\right\},  \\
    & (p,x)\in  [\mf_{A,Q}=0]\cap [\mf_{C}>0]\cap\left[\ball{\bar p}{\delta}\times\ball{\bar x}{\delta}\right]\bigg\}.
\end{eqnarray*}
\end{remark}

In order to clarify the role of $(\DRC)$, some examples
are presented, which should discourage from its removal.
The first one aims at showing that the violation of the condition
$(\DRC)$ can yield dramatic effects already in terms of solution
existence.

\begin{example}     \label{ex:31}
Let $P=[0,+\infty)$ and $\X=\Y=\R$ be equipped with their standard metric
and normed structure, respectively. Consider the family $(\SFP_p)$
defined by the following data:
$$
   C(p)=(-\infty,-p],\qquad Q(p)=[p,+\infty),
   \qquad A(p,x)=\left(p+\frac{1}{2}\right)x,
$$
and set $\bar p=\bar x=0$. Clearly, the solution map $\Solv:[0,+\infty)
\rightrightarrows\R$ associated with the family defined above is given by
$$
  \Solv(p)=C(p)\cap A(\cdot,p)^{-1}(Q(p))=
  (-\infty,-p]\cap \left[\frac{p}{p+\frac{1}{2}},+\infty\right),
  \quad p\in [0,+\infty).
$$
Therefore, one finds
\begin{eqnarray*}
  \Solv(p)=\left\{\begin{array}{ll}
      \{0\}, & \qquad\hbox{\ if } p=0, \\
   \\
      \varnothing, & \qquad\hbox{\ if } p\in (0,+\infty). \\
  \end{array}
  \right.
\end{eqnarray*}
In other words, such an instance of $(\SFP_p)$ fails to be solvable
if $p\ne \bar p=0$, no matter how close is $p$ to $\bar p$.
Through the definition, one checks that $C$ and $Q$ are l.s.c. at $(0,0)$
and at $(0,A(0,0))=(0,0)$, respectively.
The map $A(\cdot,0)$ is continuous everywhere, as it is constant.
Let us check that the condition $(\DRC)$ in this case is violated.
Fix an arbitrary $\delta>0$ and choose $p_\delta\in (0,\min\{\delta,\frac{1}{2}\})$
and $x_\delta$ such that
$$
  -\delta<x_\delta=-p_\delta<0<\frac{p_\delta}{p_\delta+\frac{1}{2}}.
$$
With the above choices, it is true that
$$
  (p_\delta,x_\delta)\in\ball{0}{\delta}\times\ball{0}{\delta}
  \hbox{ and } A(p_\delta,x_\delta)\not\in [p_\delta,+\infty)=Q(p_\delta),
$$
and hence $(p_\delta,x_\delta)\in [\mf_{A,Q}>0]\cap[\ball{0}{\delta}\times\ball{0}{\delta}]$.
Nevertheless, one has
$$
  \Ncone{A(p_\delta,x_\delta)}{\cball{Q(p_\delta)}{d_{A,Q}}}\cap\Usfer=\{-1\},
$$
so that
$$
  A(p_\delta,\cdot)^*\left(\Ncone{A(p_\delta,x_\delta)}{\cball{Q(p_\delta)}{d_{A,Q}}}\cap\Usfer\right)
  =\left\{-\left(p_\delta+\frac{1}{2}\right)\right\}.
$$
On the other hand, since $x_\delta\in\bd (-\infty,-p_\delta]$, one has
$$
  \Ncone{x_\delta}{\cball{C(p_\delta)}{d_C}}\cap\Uball=
  \Ncone{x_\delta}{C(p_\delta)}\cap\Uball=[0,1].
$$
Thus, for an element such as $(x_\delta,p_\delta)$ one finds
\begin{eqnarray*}
  A(p_\delta,\cdot)^*\left(\Ncone{A(p_\delta,x_\delta)}{\cball{Q(p_\delta)}{d_{A,Q}}}\cap\Usfer\right)+
    [\Ncone{x_\delta}{\cball{C(p_\delta)}{d_C}}\cap\Uball]  \\
  =\left\{-\left(p_\delta+\frac{1}{2}\right)\right\}+[0,1]=
  \left[-\left(p_\delta+\frac{1}{2}\right),\frac{1}{2}-p_\delta\right].
\end{eqnarray*}
Since $p_\delta<\frac{1}{2}$, one has $0\in \left[-\left(p_\delta+\frac{1}{2}\right),\frac{1}{2}-p_\delta\right]$,
what implies that $\tau(\delta)=\tau_{A,Q}=0$.
As the same happens for every $\delta>0$, the condition (\DRC) is
violated.
\end{example}

The next example shows that, even if all the problems $(\SFP_p)$ are solvable all around
$\bar p$, yet in the absence of the condition $(\DRC)$ the error bound estimate
in (\ref{in:serbo}) may fail to hold true.

\begin{example}    \label{ex:32}
Let $P$, $\X$ and $\Y$ be as in Example \ref{ex:31}.
Consider now the family $(\SFP_p)$ defined by the following data:
$$
   C(p)=\R,\qquad Q(p)=[p^2,+\infty),
   \qquad A(p,x)=px,
$$
and set again $\bar p=\bar x=0$. The related solution map $\Solv$
is given by
\begin{eqnarray*}
  \Solv(p)=\left\{\begin{array}{ll}
      \R, & \qquad\hbox{\ if } p=0, \\
   \\
    \hbox{$[p,+\infty)$}, & \qquad\hbox{\ if } p\in (0,+\infty).
  \end{array}  \right.
\end{eqnarray*}
Consequently,
$$
  \dist{x}{\Solv(p)}=\max\{p-x,0\},\quad\forall (p,x)\in [0,+\infty)\times\R,
$$
while
$$
  \dist{A(p,x)}{Q(p)}=\max\{p^2-px,0\},\quad\forall (p,x)\in [0,+\infty)\times\R.
$$
In particular, once fixed arbitrarily positive $\eta,\, \zeta$ and $\sigma$, if taking
$p\in (0,\min\{\sigma^{-1},\, \zeta\})$, one finds
$$
\dist{0}{\Solv(p)}=p>\sigma p^2=\sigma\dist{A(p,0)}{Q(p)},
$$
which contradicts the assertion (tt) in Theorem \ref{thm:Solverbo}.
Notice that, whereas hypotheses (i)--(iii) of that theorem are fulfilled,
the condition $(\DRC)$ is violated. To see this, fix an arbitrary $\delta>0$
and consider the sequence of points $\{(1/n,x_n)\}_{n\in\N}$ in $P\times\X=[0,+\infty)\times\R$,
with $x_n$ such that
$$
  0<x_n<\frac{1}{n^2}<\frac{1}{n},\quad n\in\N\backslash\{0,\, 1\}.
$$
For $n>1/\delta$, one has $(1/n,x_n)\in\ball{0}{\delta}\times\ball{0}{\delta}$
and $A(1/n,x_n)\not\in Q(1/n)$, and hence $\Ncone{A(1/n,\cdot)}{\cball{ Q(1/n)}{d_{A,Q}}}\cap\Usfer=\{-1\}$,
so that
$$
  A(1/n,\cdot)^*\left(\Ncone{A(1/n,\cdot)}{\cball{ Q(1/n)}{d_{A,Q}}}\cap\Usfer\right)=\left\{-\frac{1}{n}\right\},
  \quad n\in\N\backslash\{0,\, 1\},
$$
while $\Ncone{x_n}{\cball{C(1/n)}{d_C}}=\Ncone{x_n}{C(1/n)}=\{0\}$, as $x_n\in\inte C(1/n)$. Since
for this family of problems $[\mf_C>0]=
\varnothing$, one has $\tau_C=+\infty$. Thus, one obtains
$$
   \tau(\delta)=\tau_{A,Q}(\delta)\le\lim_{n\to\infty}\left|-\frac{1}{n}\right|=0.
$$
\end{example}

In contrast to the preceding examples, the next one illustrates a simple case, to
which Theorem \ref{thm:Solverbo} can be successfully applied thanks to the fulfilment
of the condition $(\DRC)$.

\begin{example}
Let $P$, $\X$ and $\Y$ be as in Example \ref{ex:31}.
Consider the family $(\SFP_p)$ defined by the following data:
$$
   C(p)=\R,\qquad Q(p)=[p,+\infty),
   \qquad A(p,x)=(p+1)x,
$$
and set again $\bar p=\bar x=0$. Associated to such a family $(\SFP_p)$,
one finds the following solution map:
$$
   \Solv(p)=\left[\frac{p}{p+1},+\infty\right),\quad\forall p\in [0,+\infty).
$$
It follows
\begin{eqnarray}    \label{in:erboex:33}
  \dist{x}{\Solv(p)} &=&  \max\left\{\frac{p}{p+1}-x,\, 0\right\} \nonumber
   \le \max\{p-(p+1)x,\, 0\}   \\
   &=&\mf_{A,Q}(p,x)=\mf(p,x), \quad\forall (p,x)\in
   [0,+\infty)\times\R.
\end{eqnarray}
Indeed, if $x<\frac{p}{p+1}$, as $p\in [0,+\infty)$, it is true that
$$
  \frac{p}{p+1}-x\le p-(p+1)x.
$$
The above inequality shows that the error bound estimate in (\ref{in:serbo})
holds with $\tau(\delta)=1$ and arbitrary $\zeta$ and $\eta$.
Let us check the fulfilment of the condition $(\DRC)$.
If $(p,x)$ is an arbitrary element in $[0,+\infty)\times\R$, such that $\mf_{A,Q}(p,x)>0$, that is
$x<\frac{p}{p+1}$, one has $\Ncone{A(p,x)}{\cball{Q(p)}{d_{A,Q}}}\cap\Usfer=\{-1\}$, and hence
$$
  A(p,\cdot)^*\left(\Ncone{A(p,x)}{\cball{Q(p)}{d_{A,Q}}}\cap\Usfer\right)=\{-(p+1)\},
$$
while $\Ncone{x}{C(p)}\cap\Uball=\{0\}$, as $x\in\inte C(p)$. Consequently,
fixed any $\delta>0$, one obtains
$$
   \tau_{A,Q}(\delta)=\inf\{|-(p+1)|:\ (p,x)\in [\mf_{A,Q}>0]\cap
   [\ball{0}{\delta}\times\ball{0}{\delta}]\}=1.
$$
The fact that $[\mf_C>0]=\varnothing$ so $\tau_C(\delta)=+\infty$, allows
one to conclude that $\tau(\delta)=1$.
\end{example}

In order to afford further insights into the role played by $(\DRC)$, it is relevant to
note that the error bound estimate in (\ref{in:serbo}) can be achieved also
under an alternative qualification condition based on the concept of
subtransversality/metric subregularity.
Recall that two intersecting subsets, say $\Omega_1$ and $\Omega_2$, of the same
metric space are said to be subtransversal at $x_0\in\Omega_1\cap\Omega_2$
if there exist $\alpha\in (0,1)$ and $\delta>0$ such that
$$
  \alpha\dist{x}{\Omega_1\cap\Omega_2}\le\max\{\dist{x}{\Omega_1},\, \dist{x}{\Omega_2}\},
  \quad\forall x\in\cball{x_0}{\delta}.
$$
A set-valued map between metric spaces $F:X\rightrightarrows Y$ is said to be
subregular at $(x_0,y_0)\in\graph F$ if there exists positive $\kappa$ and $\delta$
such that
$$
  \dist{x}{F^{-1}(y_0)}\le\kappa\dist{y_0}{F(x)},\quad\forall
  x\in\cball{x_0}{\delta}.
$$
Now, with reference to a family $(\SFP_p)$, notice that if, for any fixed $p\in P$,
the set-valued map $F_{A,Q}(p,\cdot):\X\rightrightarrows\Y$, defined as
$$
   F_{A,Q}(p,x)=A(p,x)-Q(p),
$$
is metrically subregular at $(\bar x,\nullv)$ one has
\begin{eqnarray*}
  \dist{x}{A(p,\cdot)^{-1}(Q(p))} &=& \dist{x}{[A(p,\cdot)-Q(p)]^{-1}(\nullv)}=
  \dist{x}{F_{A,Q}(p,\cdot)^{-1}(\nullv)} \\
   &\le& \kappa \dist{\nullv}{ F_{A,Q}(p,x)}=\kappa \dist{\nullv}{A(p,x)-Q(p)}  \\
   &=& \kappa \dist{A(p,x)}{Q(p)},\quad\forall x\in\cball{\bar x}{\delta}.
\end{eqnarray*}
Thus, if assuming that there exist positive constants $\zeta$ and $\eta$
such that both the following condition hold:
\begin{itemize}
  \item[(i)] for every $p\in\cball{\bar p}{\zeta}$, each pair of sets $C(p)$ and
  $A(p,\cdot)^{-1}(Q(p))$ do intersect in $\cball{\bar x}{\eta}$ and
  are subtransversal at $\bar x\in C(\bar p)\cap A(\bar p,\cdot)^{-1}(Q(\bar p))$,
  with uniform subtransversality constant $\alpha\in (0,1)$ and $\delta>0$;

  \item[(ii)] for every $p\in\cball{\bar p}{\zeta}$, each set-valued map
  $F_{A,Q}(p,\cdot)$ is metrically subregular at $(\bar x,\nullv)$, with a uniform
  metric subregularity bound $\kappa>0$,
\end{itemize}
one finds
\begin{eqnarray*}
  \dist{x}{\Solv(p)} &=& \dist{x}{C(p)\cap[A(p,\cdot)^{-1}(Q(p))]}   \\
  &\le & \alpha\max\{\dist{x}{C(p)},\, \dist{x}{A(p,\cdot)^{-1}(Q(p))}\} \\
   &\le& \alpha \left[\dist{x}{C(p)}+\dist{x}{A(p,\cdot)^{-1}(Q(p))}\right] \\
  &\le&  \alpha \left[\dist{x}{C(p)}+\kappa \dist{A(p,x)}{Q(p)}\right] \\
  &\le & \alpha\max\{1,\, \kappa\}\mf(p,x), \quad\forall
  (p,x)\in  \cball{\bar p}{\zeta}\times\cball{\bar x}{\eta}.
\end{eqnarray*}
Since a set-valued map $F:\X\rightrightarrows\Y$ between normed
spaces is known to be metrically subregular at $(x_0,y_0)\in\graph F$ iff
the pair $\graph F$ and $\X\times\{y_0\}$ is subtransversal at $(x_0,y_0)$,
one sees that the alternative qualification condition presented above can be expressed
purely in terms of subtransversality (for a thorough study of subtransversality
in the context of variational analysis, the reader can refer to \cite[Section 7.1]{Ioff17}
and \cite{KrLuTh17}).


\subsection{A condition for Lipschitz lower semicontinuity}

On the base of the elements discussed above, one is in a
position to establish the following sufficient condition for the first of the
properties appearing in Definition \ref{def:Lipsemcont}.

\begin{theorem}[Lipschitz lower semicontinuity of $\Solv$]   \label{thm:SolvLiplsc}
With reference to a family $(\SFP_p)$, let $(\bar p,\bar x)\in
\graph\Solv$. Suppose that:
\begin{itemize}
  \item[(i)]  $C:P\rightrightarrows\X$ is Lipschitz l.s.c. at $(\bar p,\bar x)$;

  \item[(ii)] $Q:P\rightrightarrows\Y$ is Lipschitz l.s.c. at
  $\left(\bar p,A(\bar p,\bar x)\right)$;

  \item[(iii)] $A(\cdot,\bar x):P\rightarrow\Y$ is calm at $\bar p$;

  \item[(iv)] the condition $(\DRC)$ holds.
\end{itemize}
Then,  $\Solv$ is Lipschitz l.s.c. at $(\bar p,\bar x)$ and it holds
\begin{equation}    \label{in:LiplscSolvest}
    \Liplsc{\Solv}{\bar p}{\bar x}\le {\Liplsc{C}{\bar p}{\bar x}
    +\clm{A(\cdot,\bar x)}{\bar p}+\Liplsc{Q}{\bar p}{A(\bar p,\bar x)}
    \over \tau(\delta)}.
\end{equation}
\end{theorem}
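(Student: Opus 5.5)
The plan is to reduce everything to Theorem \ref{thm:Solverbo}, evaluated at the reference point $x=\bar x$. First I would check that hypotheses (i)--(iii) of Theorem \ref{thm:SolvLiplsc} imply hypotheses (i)--(iii) of Theorem \ref{thm:Solverbo}. By Remark \ref{rem:lscLiplsc}(i), Lipschitz lower semicontinuity of $C$ at $(\bar p,\bar x)$ and of $Q$ at $(\bar p,A(\bar p,\bar x))$ give lower semicontinuity in the sense of \cite{Borw81}. Calmness of the single-valued map $A(\cdot,\bar x)$ at $\bar p$, in the sense of (\ref{in:defcalmsinglevm}), clearly gives continuity at $\bar p$. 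Together with (DRC), Theorem \ref{thm:Solverbo} then provides $\eta,\zeta>0$ such that the error bound (\ref{in:serbo}) holds on $\cball{\bar p}{\zeta}\times\cball{\bar x}{\eta}$. Applying it at $x=\bar x$ gives
$$
\dist{\bar x}{\Solv(p)}\le\tau(\delta)^{-1}\mf(p,\bar x),\quad\forall p\in\cball{\bar p}{\zeta},
$$
and by assertion (t) we also know that $\Solv(p)\neq\varnothing$.

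Next I would estimate $\mf(p,\bar x)$ linearly in $d(p,\bar p)$, following the chain of inequalities in the proof of Lemma \ref{lem:contmf} but now with quantitative constants. Fix $\ell_C>\Liplsc{C}{\bar p}{\bar x}$, $\ell_Q>\Liplsc{Q}{\bar p}{A(\bar p,\bar x)}$ and $\ell_A>\clm{A(\cdot,\bar x)}{\bar p}$, each with its own radius, and let $\delta_*$ be the minimum of these radii and $\zeta$. For $p\in\cball{\bar p}{\delta_*}$, the definitions give $\dist{\bar x}{C(p)}\le\ell_C d(p,\bar p)$ and $\dist{A(\bar p,\bar x)}{Q(p)}\le\ell_Q d(p,\bar p)$. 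Since $x\mapsto\dist{x}{Q(p)}$ is $1$-Lipschitz,
$$
\dist{A(p,\bar x)}{Q(p)}\le\|A(p,\bar x)-A(\bar p,\bar x)\|+\dist{A(\bar p,\bar x)}{Q(p)}\le(\ell_A+\ell_Q)d(p,\bar p).
$$
Adding the two pieces yields $\mf(p,\bar x)\le(\ell_C+\ell_A+\ell_Q)d(p,\bar p)$.

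Combining the two steps gives $\dist{\bar x}{\Solv(p)}\le L\,d(p,\bar p)$ with $L=(\ell_C+\ell_A+\ell_Q)/\tau(\delta)$. The step needing care is passing from this distance bound to a nonempty intersection with the \emph{closed} ball $\cball{\bar x}{L d(p,\bar p)}$, since the infimum defining the distance need not be attained in a general Banach space. To get around this I would use a slightly larger constant. For any $L'>L$ and $p\neq\bar p$, there is a point of $\Solv(p)$ within distance $L'd(p,\bar p)$ of $\bar x$. For $p=\bar p$ the point $\bar x$ itself works. Hence (\ref{in:defLiplsc}) holds with constant $L'$, so $\Solv$ is Lipschitz l.s.c. at $(\bar p,\bar x)$. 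Letting $L'\downarrow L$, and then letting $\ell_C,\ell_A,\ell_Q$ decrease to their respective exact bounds, gives the estimate (\ref{in:LiplscSolvest}).

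This last limiting step is where I expect the main obstacle. The radii $\delta_*$ depend on the chosen constants, but that causes no difficulty because only an infimum over admissible constants is being estimated. The quantity $\tau(\delta)$, however, stays fixed throughout, with the same $\delta$ as in (DRC), so no further argument is needed for it.
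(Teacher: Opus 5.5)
Your proposal is correct and follows essentially the same route as the paper's proof. You evaluate the error bound of Theorem \ref{thm:Solverbo} at $x=\bar x$, bound $\mf(p,\bar x)$ linearly in $d(p,\bar p)$ using the Lipschitz l.s.c.\ of $C$ and $Q$ and the calmness of $A(\cdot,\bar x)$, and then pass to the infimum over admissible constants. The only difference is cosmetic and concerns the possible non-attainment of the distance: the paper inserts intermediate constants $\tilde{\ell_C},\tilde{\ell_Q}$ to obtain a strict inequality, while you enlarge the final constant to $L'>L$, which is equivalent.
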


\begin{proof}
Let us start with observing that, since  Lipschitz lower
semicontinuity implies mere lower semicontinuity for any set-valued
mapping (recall Remark \ref{rem:lscLiplsc}(i)), as well as calmness
for a single-valued mapping implies continuity (recall Remark \ref{rem:lscLiplsc}(iv)),
the assumption set of Theorem \ref{thm:SolvLiplsc} enables one to apply Theorem
\ref{thm:Solverbo}, according to which there exist positive
constants $\zeta$ and $\eta$ such that the estimate in (\ref{in:serbo})
is true. The latter, in particular, implies
\begin{equation}    \label{in:erboatbarx}
  \dist{\bar x}{\Solv(p)}\le\tau(\delta)^{-1}\mf(p,\bar x),\quad\forall p\in
  \cball{\bar p}{\zeta}.
\end{equation}
Take $\ell_C>\tilde{\ell_C}>\Liplsc{C}{\bar p}{\bar x}$. According to hypothesis (i)
there exists $r_C>0$ such that
\begin{equation}   \label{int:LiplscC}
  C(p)\cap \cball{\bar x}{\tilde{\ell_C} d(p,\bar p)}\ne\varnothing,\quad\forall p\in
  \cball{\bar p}{r_C}.
\end{equation}
Take $\ell_Q>\tilde{\ell_Q}>\Liplsc{Q}{\bar p}{A(\bar p,\bar x)}$. Owing to hypothesis (ii)
there exists $r_Q>0$ such that
\begin{equation}   \label{int:LiplscQ}
  Q(p)\cap \cball{A(\bar p,\bar x)}{\tilde{\ell_Q}d(p,\bar p)}\ne\varnothing,\quad\forall p\in
  \cball{\bar p}{r_Q}.
\end{equation}
Take $\gamma_A>\clm{A(\cdot,\bar x)}{\bar p}$. According to hypothesis (iii)
there exists $r_A>0$ such that
\begin{equation}   \label{in:clmAbarx}
  \|A(p,\bar x)-A(\bar p,\bar x)\|\le\gamma_Ad(p,\bar p),\quad\forall p\in
  \cball{\bar p}{r_A}.
\end{equation}
Now, define $r_\Sigma=\min\{\zeta,\, r_C,\, r_Q,\, r_A\}$.
On account of the nonemptiness in (\ref{int:LiplscC}), one has
\begin{equation}   \label{in:distC}
  \dist{\bar x}{C(p)}\le \tilde{\ell_C} d(p,\bar p),\quad\forall p
  \in\cball{\bar p}{r_\Sigma}.
\end{equation}
On account of the nonemptiness in (\ref{int:LiplscQ}) and of inequality
(\ref{in:clmAbarx}), one has
\begin{eqnarray}   \label{in:distAC}
  \dist{A(p,\bar x)}{Q(p)}&\le&\|A(p,\bar x)-A(\bar p,\bar x)\|+\dist{A(\bar p,\bar x)}{Q(p)}  \nonumber  \\
  &\le& (\gamma_A+\tilde{\ell_Q})d(p,\bar p), \quad\forall p
  \in\cball{\bar p}{r_\Sigma}.
\end{eqnarray}
Thus, by combining the inequalities (\ref{in:erboatbarx}), (\ref{in:distC})
and (\ref{in:distAC}), one obtains
\begin{eqnarray*}
  \dist{\bar x}{\Solv(p)} &\le &
  {\tilde{\ell_C}+\gamma_A+\tilde{\ell_Q}\over \tau(\delta)}d(p,\bar p)   \\
  &<& {\ell_C+\gamma_A+\ell_Q\over \tau(\delta)}d(p,\bar p),\quad\forall p
  \in\cball{\bar p}{r_\Sigma}\backslash\{\bar p\}.
\end{eqnarray*}
The last inequality means that
$$
  \Sigma(p)\cap \cball{\bar x}{\frac{\ell_C+\gamma_A+\ell_Q}{\tau(\delta)}d(p,\bar p)}
  \ne\varnothing,\quad\forall p\in
  \cball{\bar p}{r_\Sigma}.
$$
This amounts to say that $\Solv$ is Lipschitz l.s.c. at $(\bar p,\bar x)$
with
$$
  \Liplsc{\Solv}{\bar p}{\bar x}\le \frac{\ell_C+\gamma_A+\ell_Q}{\tau(\delta)}.
$$
Since the above inequality holds true for every $\ell_C>\Liplsc{C}{\bar p}{\bar x}$, for
every $\gamma_A>\clm{A(\cdot,\bar x)}{\bar p}$ and for every $\ell_Q>\Liplsc{Q}{\bar p}{A(\bar p,\bar x)}$,
from it one can derive the estimate (\ref{in:LiplscSolvest}) in the thesis.
This completes the proof.
\smartqed\qed
\end{proof}

\begin{remark}
With respect to Theorem \ref{thm:Solverbo}, the gain obtained thanks to Theorem \ref{thm:SolvLiplsc}
consists in ensuring the existence of a sequence $(x_n)_{n\in\N}$ of solutions to approximated problems
which converges to the solution $\bar x$ linearly with respect to the perturbation
parameter, with a linear convergence rate estimated as
in (\ref{in:LiplscSolvest}). Of course, the price to be paid for such an improvement
is a stricter set of assumptions, replacing mere semicontinuity properties of
the data with their quantitative counterparts.
\end{remark}


\subsection{A condition for calmness}

As Lipschitz lower semicontinuity and calmness are, in general,
independent of each other, a condition devoted to the latter property
is worth being established too.

\begin{theorem}[Calmness of $\Solv$]    \label{thm:Solvcalm}
With reference to a class $(\SFP_p)$, let $(\bar p,\bar x)\in\graph\Solv$.
Suppose that:
\begin{itemize}
  \item[(i)]  $C:P\rightrightarrows\X$ is l.s.c. and calm at $(\bar p,\bar x)$;

  \item[(ii)] $Q:P\rightrightarrows\Y$ is l.s.c. and calm at
  $\left(\bar p,A(\bar p,\bar x)\right)$;

  \item[(iii)] $A:P\times\X\rightarrow\Y$ is Lipschitz around $(\bar p,\bar x)$;

  \item[(iv)] the condition $(\DRC)$ holds.
\end{itemize}
Then,  $\Solv$ is calm at $(\bar p,\bar x)$ and it holds
\begin{equation}    \label{in:calmSolvest}
    \clmsv{\Solv}{\bar p}{\bar x}\le {\clmsv{C}{\bar p}{\bar x}
    +\lip{A}{(\bar p,\bar x)}+\clmsv{Q}{\bar p}{A(\bar p,\bar x)}
    \over \tau(\delta)}.
\end{equation}
\end{theorem}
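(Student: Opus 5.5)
The plan is to derive calmness from the error bound (tt) of Theorem \ref{thm:Solverbo}, applied only at the nominal parameter $p=\bar p$, and to estimate the merit function $\mf(\bar p,x)$ at solutions $x\in\Solv(p)$ of the perturbed problems.

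First I check that Theorem \ref{thm:Solverbo} applies. Its hypotheses (i) and (ii) are contained in the current (i) and (ii). Its hypothesis (iii) holds because $A$ is Lipschitz around $(\bar p,\bar x)$, so in particular $A(\cdot,\bar x)$ is continuous at $\bar p$. Hypothesis (iv) is $(\DRC)$. Hence there are $\zeta,\eta>0$ such that
\begin{equation*}
\dist{x}{\Solv(\bar p)}\le\tau(\delta)^{-1}\left[\dist{A(\bar p,x)}{Q(\bar p)}+\dist{x}{C(\bar p)}\right],\quad\forall x\in\cball{\bar x}{\eta}.
\end{equation*}
The lower semicontinuity of $C$ and $Q$ is needed only at this step.

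Next I fix constants. Take $\ell_C>\clmsv{C}{\bar p}{\bar x}$ with radii $\delta_C,\zeta_C$ as in (\ref{in:defsvcalm}). Take $\ell_Q>\clmsv{Q}{\bar p}{A(\bar p,\bar x)}$ with radii $\delta_Q,\zeta_Q$. Take $L>\lip{A}{(\bar p,\bar x)}$, valid on a neighbourhood of radius $r_A$; here $P\times\X$ is understood with a product metric, for instance the sum metric, so that $\|A(p,x)-A(p',x')\|\le L(d(p,p')+\|x-x'\|)$. I then choose $\delta'\le\min\{\delta_C,\delta_Q,r_A/2\}$ and $\zeta'\le\min\{\eta,\zeta_C,r_A/2\}$ so small that $L(\delta'+\zeta')\le\zeta_Q$.

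Now let $p\in\cball{\bar p}{\delta'}$ and $x\in\Solv(p)\cap\cball{\bar x}{\zeta'}$. The two terms of $\mf(\bar p,x)$ are bounded as follows.
\begin{itemize}
\item[$\bullet$] Since $x\in C(p)\cap\cball{\bar x}{\zeta_C}$, calmness of $C$ gives $\dist{x}{C(\bar p)}\le\ell_C d(p,\bar p)$.
\item[$\bullet$] Set $y=A(p,x)\in Q(p)$. The Lipschitz bound gives $\|y-A(\bar p,\bar x)\|\le L(d(p,\bar p)+\|x-\bar x\|)\le\zeta_Q$, so $y\in Q(p)\cap\cball{A(\bar p,\bar x)}{\zeta_Q}$. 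Calmness of $Q$ then yields $\dist{y}{Q(\bar p)}\le\ell_Q d(p,\bar p)$.
\item[$\bullet$] Since $\dist{\cdot}{Q(\bar p)}$ is $1$-Lipschitz and $\|A(\bar p,x)-A(p,x)\|\le L d(p,\bar p)$, it follows that $\dist{A(\bar p,x)}{Q(\bar p)}\le(L+\ell_Q)d(p,\bar p)$.
\end{itemize}
Inserting these into the error bound gives
\begin{equation*}
\dist{x}{\Solv(\bar p)}\le\frac{\ell_C+L+\ell_Q}{\tau(\delta)}d(p,\bar p).
\end{equation*}
Since $\Solv(\bar p)$ is closed (Remark \ref{rem:Solvtoppro}), this gives $x\in\cball{\Solv(\bar p)}{\frac{\ell_C+L+\ell_Q}{\tau(\delta)}d(p,\bar p)}$, which is (\ref{in:defsvcalm}) for $\Solv$. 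Letting $\ell_C$, $L$ and $\ell_Q$ decrease to their exact bounds yields (\ref{in:calmSolvest}).

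The main obstacle is the localization. One must check that $A(p,x)$ lies in the neighbourhood of $A(\bar p,\bar x)$ where the calmness of $Q$ applies. One must also check that the same $x$ lies in the ball $\cball{\bar x}{\eta}$ where the error bound holds. This is why joint Lipschitz continuity of $A$, rather than mere calmness of $A(\cdot,\bar x)$, is assumed, and the choice of $\zeta'$ and $\delta'$ above is designed to handle both requirements at once.
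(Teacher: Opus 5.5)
Your proof is correct and follows the paper's own route. You apply the error bound (tt) of Theorem \ref{thm:Solverbo} only at $p=\bar p$, and you bound the two terms of $\mf(\bar p,x)$ for $x\in\Solv(p)$ near $\bar x$ through the calmness of $C$, the calmness of $Q$ at $A(p,x)$, and the Lipschitz estimate $\|A(\bar p,x)-A(p,x)\|\le L\,d(p,\bar p)$. Your localization is also cleaner than the paper's: you place the $A$-related constraints on the $x$-radius as well ($\zeta'\le r_A/2$ with $L(\delta'+\zeta')\le\zeta_Q$), whereas the paper's $x$-radius $\eta_\Solv=\min\{\eta,\eta_C,\eta_Q\}$ does not by itself guarantee $x\in\cball{\bar x}{r_\Solv}$.
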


\begin{proof}
Under the above set of assumptions it is possible to apply Theorem \ref{thm:Solverbo}.
Thus, one gets the existence of $\zeta,\, \eta>0$ for which the estimate in (\ref{in:serbo})
holds.
Take $\gamma_C>\clmsv{C}{\bar p}{\bar x}$.
According to hypothesis (i), there exist positive $r_C$ and $\eta_C$ such that
\begin{equation}   \label{in:clmC}
  C(p)\cap\cball{\bar x}{\eta_C}\subseteq\cball{C(\bar p)}{\gamma_C d(p,\bar p)},
  \quad\forall p\in\cball{\bar p}{r_C}.
\end{equation}
Take $\gamma_Q>\clmsv{Q}{\bar p}{A(\bar p,\bar x)}$.
According to hypothesis (ii), there exist positive $r_Q$ and $\eta_Q$ such that
\begin{equation}   \label{in:clmQ}
  Q(p)\cap\cball{A(\bar p,\bar x)}{\eta_Q}\subseteq\cball{Q(\bar p)}{\gamma_Q d(p,\bar p)},
  \quad\forall p\in\cball{\bar p}{r_Q}. \end{equation}
Take $\ell_A>\lip{A}{(\bar p,\bar x)}$. Then, by hypothesis (iii) there exists
$r_A>0$ such that
\begin{equation}    \label{in:lipA}
  \|A(p,x)-A(\bar p,\bar x)\|\le\ell_A\left[d(p,\bar p)+\|x-\bar x\|\right],\quad\forall
  (p,x)\in\cball{\bar p}{r_A}\times\cball{\bar x}{r_A}.
\end{equation}
Define
$$
 \eta_\Solv=\min\{\eta,\, \eta_C,\, \eta_Q\}\qquad\hbox{and}\qquad
 r_\Solv=\min\{\zeta,\, r_C,\, r_Q,\, r_A,\, (2\ell_A)^{-1}\eta_Q\}.
$$
Now, take an arbitrary $x\in\Solv(p)\cap\cball{\bar x}{\eta_\Solv}$.
Since, in particular, $x\in C(p)\cap\cball{\bar x}{\eta_C}$, then, on account of
inclusion (\ref{in:clmC}), one has
$$
   \dist{x}{C(\bar p)}\le\gamma_C d(p,\bar p),\quad\forall
   p\in\cball{\bar p}{r_\Solv}.
$$
Moreover, as $x\in\Solv(p)$, it must be $A(p,x)\in Q(p)$.
From inequality (\ref{in:lipA}), provided that $(p,x)\in \cball{\bar p}{r_\Solv}
\times\cball{\bar x}{r_\Solv}$, it is possible to deduce
$$
   \|A(p,x)-A(\bar p,\bar x)\|\le\ell_A\cdot 2r_\Solv\le\eta_Q,
$$
so it is true that $A(p,x)\in Q(p)\cap\cball{A(\bar p,\bar x)}{\eta_Q}$.
Consequently, by inequality (\ref{in:clmQ}) one finds
$$
  \dist{A(p,x)}{Q(\bar p)}\le\gamma_Q d(p,\bar p),
  \quad\forall p\in\cball{\bar p}{r_\Solv}.
$$
Thus, for every $x\in\Solv(p)\cap\cball{\bar x}{\eta_\Solv}$,
by recalling the estimate in (\ref{in:serbo}) one obtains
\begin{eqnarray*}
  \dist{x}{\Solv(\bar p)} &\le& \tau(\delta)^{-1}\mf(x,\bar p)=\tau(\delta)^{-1}
  \left[\dist{A(\bar p,x)}{Q(\bar p)}+\dist{x}{C(\bar p)}\right]  \\
   &\le & \tau(\delta)^{-1}
  \left[\|A(\bar p,x)-A(p,x)\|+\dist{A(p,x)}{Q(\bar p)}+\dist{x}{C(\bar p)}\right]  \\
  &\le & \tau(\delta)^{-1}\left(\ell_A+\gamma_Q+\gamma_C\right)d(p,\bar p),
  \quad\forall p\in\cball{\bar p}{r_\Solv}.
\end{eqnarray*}
By arbitrariness of $x\in\Solv(p)\cap\cball{\bar x}{\eta_\Solv}$, the last
inequality implies that
$$
 \Solv(p)\cap\cball{\bar x}{\eta_\Solv}\subseteq
 \cball{\Solv(\bar p)}{{\ell_A+\gamma_Q+\gamma_C\over \tau(\delta)}},
  \quad\forall p\in\cball{\bar p}{r_\Solv}.
$$
This shows that $\Solv$ is calm at $(\bar p,\bar x)$, with
$$
  \clmsv{\Solv}{\bar p}{\bar x}\le {\gamma_Q+\ell_A+\gamma_C
    \over \tau(\delta)}.
$$
As the above inequality remains valid for every $\gamma_Q>\clmsv{Q}{\bar p}{A(\bar p,\bar x)}$,
every $\ell_A>\lip{A}{(\bar p,\bar x)}$, and every $\gamma_C>\clmsv{C}{\bar p}{\bar x}$,
one can conclude that the estimate in (\ref{in:calmSolvest}) is true, thereby
completing the proof.
\smartqed\qed
\end{proof}

To complement Theorem \ref{thm:Solvcalm} by considering a special manifestation
of calmness, recall that, whenever a set-valued
map, which is calm at a given point in its graph, admits a graphical localization
that is single-valued at the same point, the map is said to satisfy the {\it isolated
calmness property}. Such a strengthened version of calmness attracted  a specific
interest in variational analysis inasmuch as, in contrast to mere calmness, it displays
a certain stability behaviour under perturbation (see \cite[Section 3.9]{DonRoc14}).
From Theorem  \ref{thm:Solvcalm} it is possible to derive a sufficient condition
for the isolated calmness of $\Solv$ as follows.

\begin{corollary}[Isolated calmness of $\Solv$]    \label{cor:isocalmSolv}
Under the hypotheses of Theorem  \ref{thm:Solvcalm}, suppose, in addition, that either

\begin{itemize}
  \item[$(\textrm{v}_1)$] $C:P\rightrightarrows\X$ has the isolated calmness property
  at $(\bar p,\bar x)$;

or

  \item[$(\textrm{v}_2)$] $Q:P\rightrightarrows\Y$ has the isolated calmness property
  at $(\bar p,A(\bar p,\bar x))$ and $A(\bar p,\cdot)$ is injective.
\end{itemize}
Then, $\Solv$ inherits the isolated calmness property at $(\bar p,\bar x)$.
\end{corollary}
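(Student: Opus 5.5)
Isolated calmness means calmness together with a single-valued graphical localization at the reference point: there exists $\zeta>0$ such that $F(\bar p)\cap\cball{\bar x}{\zeta}=\{\bar x\}$. Calmness of $\Solv$ at $(\bar p,\bar x)$ is already given by Theorem \ref{thm:Solvcalm}, together with the estimate (\ref{in:calmSolvest}). So the plan is only to show that $\bar x$ is an isolated point of $\Solv(\bar p)$. Calmness then upgrades to $\Solv(p)\cap\cball{\bar x}{\eta}\subseteq\cball{\bar x}{\ell d(p,\bar p)}$ after shrinking the radii. I will handle the two alternative hypotheses separately.

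\emph{Case $(\textrm{v}_1)$.} By the isolated calmness of $C$ there is $\zeta_C>0$ with $C(\bar p)\cap\cball{\bar x}{\zeta_C}=\{\bar x\}$. Since $\Solv(\bar p)\subseteq C(\bar p)$ by (\ref{eq:Solvpsetchar}) and $\bar x\in\Solv(\bar p)$, we get $\Solv(\bar p)\cap\cball{\bar x}{\zeta_C}=\{\bar x\}$ at once.

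\emph{Case $(\textrm{v}_2)$.} Set $\bar y=A(\bar p,\bar x)$. By hypothesis there is $\zeta_Q>0$ with $Q(\bar p)\cap\cball{\bar y}{\zeta_Q}=\{\bar y\}$. Since $A(\bar p,\cdot)$ is bounded linear by $(a_2)$, for $x\in\cball{\bar x}{\zeta}$ with $\zeta=\zeta_Q/\max\{1,\|A(\bar p,\cdot)\|_{\Lin}\}$ we have $\|A(\bar p,x)-\bar y\|\le\zeta_Q$. If moreover $x\in\Solv(\bar p)$, then $A(\bar p,x)\in Q(\bar p)\cap\cball{\bar y}{\zeta_Q}$, hence $A(\bar p,x)=A(\bar p,\bar x)$, and injectivity gives $x=\bar x$.

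To finish, I combine isolatedness with the calmness inclusion (\ref{in:defsvcalm}) for $\Solv$, with constants $\delta,\zeta',\ell$. Any $x\in\Solv(p)\cap\cball{\bar x}{\eta}$ satisfies $\dist{x}{\Solv(\bar p)}\le\ell d(p,\bar p)$. Choose $\eta$ and $\delta$ small enough that $\eta+\ell\delta<\zeta$. Then every point of $\Solv(\bar p)$ near $x$, within distance slightly more than $\ell d(p,\bar p)$, lies in $\cball{\bar x}{\zeta}$, so it equals $\bar x$. Hence $\|x-\bar x\|\le\ell d(p,\bar p)$, which is isolated calmness with the same bound (\ref{in:calmSolvest}).

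The only delicate point is this last step. The distance to $\Solv(\bar p)$ must actually be realized near $\bar x$, and not by far-away points of $\Solv(\bar p)$; this is exactly what the radius bookkeeping $\eta+\ell\delta<\zeta$ takes care of.
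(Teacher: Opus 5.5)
Your proposal is correct and follows essentially the same route as the paper: calmness comes from Theorem~\ref{thm:Solvcalm}, and $\bar x$ is shown to be isolated in $\Solv(\bar p)$, via $\Solv(\bar p)\subseteq C(\bar p)$ under $(\textrm{v}_1)$ and via boundedness of $A(\bar p,\cdot)$ plus injectivity under $(\textrm{v}_2)$. Your final step, upgrading to $\Solv(p)\cap\cball{\bar x}{\eta}\subseteq\cball{\bar x}{\ell d(p,\bar p)}$ with $\eta+\ell\delta<\zeta$ and an $\epsilon$-slack to cover a possibly non-attained distance, is also sound, though the paper's definition of isolated calmness (calmness plus a single-valued localization at the point) does not require it.
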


\begin{proof}
By Theorem  \ref{thm:Solvcalm} $\Solv$ is calm at $(\bar p,\bar x)$.
If $(\textrm{v}_1)$ holds, then there exists $\eta_0>0$ such that
$$
  C(\bar p)\cap\cball{\bar x}{\eta_0}=\{\bar x\}.
$$
Therefore, one finds
$$
  \Solv(\bar p)\cap\cball{\bar x}{\eta_0}= [C(\bar p)\cap A(\bar p,\cdot)^{-1}
  (Q(\bar p))]\cap\cball{\bar x}{\eta_0} \subseteq
   C(\bar p)\cap\cball{\bar x}{\eta_0}
   =\{\bar x\}.
$$
Otherwise, if ($\textrm{v}_2$) holds, there exists $\eta_Q>0$ such that
\begin{equation}  \label{eq:isocalmQ}
   Q(\bar p)\cap\cball{A(\bar p,\bar x)}{\eta_Q}=\{A(\bar p,\bar x)\}.
\end{equation}
Then, take $\eta_0$ such that $0<\eta_0<\|A(\bar p,\cdot)\|_\Lin^{-1}\eta_Q$.
Observe that
$$
   \cball{\bar x}{\eta_0}\cap A(\bar p,\cdot)^{-1}(Q(\bar p))\subseteq
   A(\bar p,\cdot)^{-1}\left(Q(\bar p)\cap\cball{A(\bar p,\bar x)}{\eta_Q}\right).
$$
Indeed, if $v\in\cball{\bar x}{\eta_0}\cap A(\bar p,\cdot)^{-1}(Q(\bar p))$, one has
$A(\bar p,\bar x)\in Q(\bar p)$ and
$$
  \|A(\bar p,v)-A(\bar p,\bar x)\|\le\|A(\bar p,\cdot)\|_\Lin\|v-\bar x\|\le
  \|A(\bar p,\cdot)\|_\Lin\frac{\eta_Q}{\|A(\bar p,\cdot)\|_\Lin}=\eta_Q.
$$
Therefore, owing to the equality in (\ref{eq:isocalmQ}) and the injectivity
of $A(\bar p,\cdot)$, one finds
\begin{eqnarray*}
  \Solv(\bar p)\cap\cball{\bar x}{\eta_0}  &=& [C(\bar p)\cap A(\bar p,\cdot)^{-1}
  (Q(\bar p))]\cap\cball{\bar x}{\eta_0}
  \subseteq A(\bar p,\cdot)^{-1}\left(Q(\bar p)\right)\cap\cball{\bar x}{\eta_0}  \\
   &\subseteq & A(\bar p,\cdot)^{-1}\left(Q(\bar p)\cap\cball{A(\bar p,\bar x)}{\eta_Q}\right) \\
   &\subseteq & A(\bar p,\cdot)^{-1}(A(\bar p,\bar x))=\{\bar x\}.
\end{eqnarray*}
This means that $\Solv$ has the isolated calmness property at $(\bar p,\bar x)$, thereby
completing the proof.
\smartqed\qed
\end{proof}


\subsection{A condition for the Aubin property}

In consideration of its importance, a specific result for the Aubin
property of the solution map is next presented.

\begin{theorem}[Aubin property of $\Solv$]    \label{thm:SolvAubin}
With reference to a class $(\SFP_p)$, let $(\bar p,\bar x)\in\graph\Solv$.
Suppose that:
\begin{itemize}
  \item[(i)]  $C:P\rightrightarrows\X$ has the Aubin property around $(\bar p,\bar x)$;

  \item[(ii)] $Q:P\rightrightarrows\Y$ has the Aubin property around $(\bar p,A(\bar p,\bar x))$;

  \item[(iii)] $A:P\times\X\rightarrow\Y$ is Lipschitz around $(\bar p,\bar x)$;

  \item[(iv)] the condition $(\DRC)$ holds.
\end{itemize}
Then,  $\Solv$ has the Aubin property at $(\bar p,\bar x)$ and it holds
\begin{equation}    \label{in:AubinSolvest}
    \Lip{\Solv}{\bar p}{\bar x}\le {\Lip{C}{\bar p}{\bar x}
    +\lip{A}{(\bar p,\bar x)}+\Lip{Q}{\bar p}{A(\bar p,\bar x)}\over \tau(\delta)}.
\end{equation}
\end{theorem}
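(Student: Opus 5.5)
The plan is to follow the scheme of Theorem \ref{thm:Solvcalm}, now with two free parameters. The Aubin property implies Lipschitz lower semicontinuity, hence lower semicontinuity (Remark \ref{rem:lscLiplsc}(i)). Lipschitz continuity of $A$ around $(\bar p,\bar x)$ implies continuity of $A(\cdot,\bar x)$ at $\bar p$. So the hypotheses of Theorem \ref{thm:Solverbo} hold, and they provide $\zeta,\eta>0$ such that the error bound (\ref{in:serbo}) is valid on $\cball{\bar p}{\zeta}\times\cball{\bar x}{\eta}$.

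Next, we fix constants $\gamma_C>\Lip{C}{\bar p}{\bar x}$, $\gamma_Q>\Lip{Q}{\bar p}{A(\bar p,\bar x)}$ and $\ell_A>\lip{A}{(\bar p,\bar x)}$. By hypotheses (i)--(iii) there are radii $r_C,\eta_C,r_Q,\eta_Q,r_A$ with the following properties:
\begin{itemize}
\item[$\bullet$] $C(p_1)\cap\cball{\bar x}{\eta_C}\subseteq\cball{C(p_2)}{\gamma_C d(p_1,p_2)}$ for all $p_1,p_2\in\cball{\bar p}{r_C}$;
\item[$\bullet$] the analogous inclusion holds for $Q$ around $A(\bar p,\bar x)$ with $\gamma_Q,\eta_Q,r_Q$;
\item[$\bullet$] $\|A(p_1,x_1)-A(p_2,x_2)\|\le\ell_A[d(p_1,p_2)+\|x_1-x_2\|]$ on $\cball{\bar p}{r_A}\times\cball{\bar x}{r_A}$.
\end{itemize}
We then choose $\eta_\Solv=\min\{\eta,\eta_C,\eta_Q,r_A,(2\ell_A)^{-1}\eta_Q\}$ and $r_\Solv=\min\{\zeta,r_C,r_Q,r_A,(2\ell_A)^{-1}\eta_Q\}$. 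These choices guarantee that any $x\in\cball{\bar x}{\eta_\Solv}$ and $p\in\cball{\bar p}{r_\Solv}$ satisfy $\|A(p,x)-A(\bar p,\bar x)\|\le\eta_Q$.

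For the core estimate, take $p_1,p_2\in\cball{\bar p}{r_\Solv}$ and $x\in\Solv(p_1)\cap\cball{\bar x}{\eta_\Solv}$. Since $x\in C(p_1)\cap\cball{\bar x}{\eta_C}$, we get $\dist{x}{C(p_2)}\le\gamma_C d(p_1,p_2)$. Since $A(p_1,x)\in Q(p_1)\cap\cball{A(\bar p,\bar x)}{\eta_Q}$, we get $\dist{A(p_1,x)}{Q(p_2)}\le\gamma_Q d(p_1,p_2)$. Hence
$$
\mf(p_2,x)\le\|A(p_2,x)-A(p_1,x)\|+\dist{A(p_1,x)}{Q(p_2)}+\dist{x}{C(p_2)}\le(\ell_A+\gamma_Q+\gamma_C)d(p_1,p_2).
$$
Because $(p_2,x)\in\cball{\bar p}{\zeta}\times\cball{\bar x}{\eta}$, the error bound (\ref{in:serbo}) applied at $(p_2,x)$ gives $\dist{x}{\Solv(p_2)}\le\tau(\delta)^{-1}(\ell_A+\gamma_Q+\gamma_C)d(p_1,p_2)$. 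Note that (\ref{in:serbo}) already contains the nonemptiness of $\Solv(p_2)$ from (t), so the distance is finite.

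The distance being finite does not by itself give the inclusion in the closed enlargement, since the infimum need not be attained. For $p_1\neq p_2$ this is harmless: we enlarge the constant slightly, and the inclusion is required only for $\ell$ above the bound. For $p_1=p_2$ the inclusion is trivial. This yields (\ref{in:defAubin}) with $\ell=(\ell_A+\gamma_Q+\gamma_C)/\tau(\delta)$ up to an arbitrarily small increment, with $\delta=r_\Solv$ and $\eta=\eta_\Solv$. Letting the constants decrease to their exact bounds gives (\ref{in:AubinSolvest}).

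The step I expect to be the main obstacle is arranging the radii so that, for two different parameters, the image points $A(p_1,x)$ stay inside the localization window $\cball{A(\bar p,\bar x)}{\eta_Q}$ of the Aubin property of $Q$. At the same time, $x$ must remain in the region where the uniform error bound of Theorem \ref{thm:Solverbo} is valid. Both requirements are met by the min-choice of radii above, using the Lipschitz estimate on $A$ relative to $(\bar p,\bar x)$.
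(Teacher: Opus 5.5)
Your proposal is correct and follows the paper's route. You apply Theorem~\ref{thm:Solverbo} for the local error bound, choose the radii so that $A(p_1,x)$ stays in $\cball{A(\bar p,\bar x)}{\eta_Q}$, and bound $\mf(p_2,x)$ by $(\ell_A+\gamma_Q+\gamma_C)d(p_1,p_2)$ using the Aubin inclusions for $C$ and $Q$ and the Lipschitz estimate on $A$. Your worry about the infimum not being attained is unnecessary: the paper defines $\cball{\Omega}{r}=[\dist{\cdot}{\Omega}\le r]$, so the distance estimate gives the inclusion directly without enlarging the constant.
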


\begin{proof}
Fixed any $\ell_C>\Lip{C}{\bar p}{\bar x}$, by hypothesis (i) there exist
positive $\eta_C$ and $\delta_C$ such that
\begin{equation}    \label{in:AubproC}
  C(p_1)\cap\cball{\bar x}{\eta_C}\subseteq\cball{C(p_2)}{\ell_Cd(p_1,p_2)},
  \quad\forall p_1,\, p_2\in\cball{\bar p}{\delta_C}.
\end{equation}
Similarly, fixed any $\ell_Q>\Lip{Q}{\bar p}{A(\bar p,\bar x)}$, by hypothesis (ii) there exist
positive $\eta_Q$ and $\delta_Q$ such that
\begin{equation}    \label{in:AubproQ}
  Q(p_1)\cap\cball{A(\bar p,\bar x)}{\eta_Q}\subseteq\cball{Q(p_2)}{\ell_Qd(p_1,p_2)},
  \quad\forall p_1,\, p_2\in\cball{\bar p}{\delta_Q}.
\end{equation}
Fixed any $\ell_A>\lip{A}{(\bar p,\bar x)}$, by hypothesis (iii) there exists $\delta_A>0$
such that
\begin{eqnarray}    \label{in:lipproA}
  \|A(p_1,x_1)-A(p_2,x_2)\| &\le& \ell_A[d(p_1,p_2)+\|x_1-x_2\|],    \\
  & & \forall p_1,\, p_2\in\cball{\bar p}{\delta_A},\
  \forall x_1,\, x_2\in\cball{\bar x}{\delta_A}.   \nonumber
\end{eqnarray}
Notice that, since the Aubin property at a given pair of the graph implies
the Lipschitz lower semicontinuity, and hence the lower semicontinuity at
the same pair (remember Remark \ref{rem:lscLiplsc}(i)), and the Lipschitz
continuity of $A$ around $(\bar p,\bar x)$ implies the continuity of
$A(\bar p,\cdot)$ at $\bar p$, then under the condition $(\DRC)$ one is in a
position to apply Theorem \ref{thm:Solverbo}, which guarantees the existence
of positive $\eta$ and $\zeta$ satisfying (t) and (tt).
In particular, from the estimate in (tt) it follows
\begin{eqnarray}     \label{in:Aubproerboxp2}
  \dist{x}{\Solv(p_2)} &\le& \tau(\delta)^{-1}[\dist{A(p_2,x)}{Q(p_2)}+
  \dist{x}{C(p_2)}] \\
   & & \forall p_2\in\cball{\bar p}{\zeta},\ \forall x\in\cball{\bar x}{\eta}.  \nonumber
\end{eqnarray}
Now, define
\begin{equation}   \label{eq:defetazeta}
  \eta_\Solv=\min\left\{\eta_C,\eta,\frac{\eta_Q}{2\ell_A},\delta_A\right\} \quad\hbox{ and }\quad
  \delta_\Solv=\min\{\delta_C,\delta_Q,\delta_A,\zeta,\eta_\Solv\}.
\end{equation}
Take an arbitrary $x\in\Solv(p_1)\cap\cball{\bar x}{\eta_\Solv}$, with $p_1\in\cball{\bar p}{\delta_\Solv}$.
By definition of $\Solv(p_1)$, it must be $x\in C(p_1)$ and hence, according
to the choice of $\eta_\Solv$, one has $x\in C(p_1)\cap\cball{\bar x}{\eta_C}$.
From the inclusion in (\ref{in:AubproC}), as $\delta_\Solv\le\delta_C$, it follows
\begin{equation}    \label{in:distxCp2}
  \dist{x}{C(p_2)}\le\ell_C d(p_1,p_2),\quad\forall p_2\in\cball{\bar p}{\delta_\Solv}.
\end{equation}
Besides, again by definition of $\Solv(p_1)$, it must be $A(p_1,x)\in Q(p_1)$.
By virtue of the choice of $\delta_\Solv$ and $\eta_\Solv$, as $x\in
\cball{\bar x}{\eta_\Solv}\subseteq\cball{\bar x}{\delta_A}$,
from the inequality in (\ref{in:lipproA}) it follows
\begin{equation}   \label{in:Ap1Ap2x}
  \|A(p_1,x)-A(p_2,x)\|\le\ell_A d(p_1,p_2),\quad\forall p_1,\, p_2\in
  \cball{\bar p}{\delta_\Solv}.
\end{equation}
Moreover, again from (\ref{in:lipproA}) and the definitions of $\eta_\Solv$
and $\delta_\Solv$ in (\ref{eq:defetazeta}), one deduces
$$
  \|A(p_1,x)-A(\bar p,\bar x)\| \le  \ell_A [d(p_1,\bar p)+\|x-\bar x\|]
  \le \ell_A(\delta_\Solv+\eta_\Solv) \le 2\ell_A\eta_\Solv\le\eta_Q,
$$
wherefrom
$$
  A(p_1,x)\in Q(p_1)\cap\cball{A(\bar p,\bar x)}{\eta_Q},\quad\forall
  p_1\in\cball{\bar p}{\delta_\Solv}.
$$
On account of (\ref{in:AubproQ}), as $\delta_\Solv\le\delta_Q$, the last
inclusion enables one to write
\begin{equation}     \label{in:distAp1Qp2}
    \dist{A(p_1,x)}{Q(p_2)}\le\ell_Q d(p_1,p_2),\quad\forall
    p_1,\, p_2\in\cball{\bar p}{\delta_\Solv}.
\end{equation}
By combining the inequalities in (\ref{in:Ap1Ap2x}) and in (\ref{in:distAp1Qp2}),
one obtains
\begin{eqnarray*}
  \dist{A(p_2,x)}{Q(p_2)} &\le& \|A(p_1,x)-A(p_2,x)\|+\dist{A(p_1,x)}{Q(p_2)} \nonumber \\
   &\le& (\ell_A+\ell_Q)d(p_1,p_2),\quad\forall p_1,\, p_2\in\cball{\bar p}{\delta_\Solv}.
\end{eqnarray*}
In the light of (\ref{in:Aubproerboxp2}) and (\ref{in:distxCp2}), the last inequality gives
$$
 \dist{x}{\Solv(p_2)} \le \tau(\delta)^{-1}[\ell_A+\ell_Q+\ell_C]
 d(p_1,p_2),\quad\forall p_1,\, p_2\in\cball{\bar p}{\delta_\Solv}.
$$
By arbitrariness of $x\in\Solv(p_1)\cap\cball{\bar x}{\eta_\Solv}$, the
above inequality shows that
$$
  \Solv(p_1)\cap\cball{\bar x}{\eta_\Solv}\subseteq\cball{\Solv(p_2)}{{\ell_A+\ell_Q+\ell_C\over
  \tau(\delta)}d(p_1,p_2)},\quad\forall p_1,\, p_2\in\cball{\bar p}{\delta_\Solv},
$$
which proves the first assertion in the thesis, with
$$
  \Lip{\Solv}{\bar p}{\bar x}\le\frac{\ell_A+\ell_Q+\ell_C}{\tau(\delta)}.
$$
The estimate in (\ref{in:AubinSolvest}) follows at once from the arbitrariness of
$\ell_C>\Lip{C}{\bar p}{\bar x}$, $\ell_Q>\Lip{Q}{\bar p}{A(\bar p,\bar x)}$ and
$\ell_A>\lip{A}{(\bar p,\bar x)}$. This completes the proof.
\smartqed\qed
\end{proof}

\begin{corollary}
Under the hypotheses of Theorem \ref{thm:SolvAubin}, suppose in addition
that either assumption $(\textrm{v}_1)$ or $(\textrm{v}_2)$ in Corollary
\ref{cor:isocalmSolv} holds. Then, there exists $\eta>0$ such that
for every sequence $(p_n)_{n\in\N}$ in $P$, with $p_n\rightarrow\bar p$ as $n\to\infty$,
every sequence $(x_n)_{n\in\N}$ in $\cball{\bar x}{\eta}$,
with $x_n\in\Solv(p_n)$, converges to $\bar x$ linearly with respect
to $d(p_n,\bar p)$, with rate estimated as in (\ref{in:AubinSolvest}).
\end{corollary}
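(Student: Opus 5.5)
The plan is to combine the Aubin property inclusion obtained in the proof of Theorem \ref{thm:SolvAubin} with the local isolation of $\bar x$ in $\Solv(\bar p)$ given by Corollary \ref{cor:isocalmSolv}.

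First I invoke Theorem \ref{thm:SolvAubin}. For any $\ell>\Lip{\Solv}{\bar p}{\bar x}$, in particular any $\ell$ exceeding the right-hand side of (\ref{in:AubinSolvest}), it yields $\eta_\Solv,\delta_\Solv>0$ such that
$$
\Solv(p_1)\cap\cball{\bar x}{\eta_\Solv}\subseteq\cball{\Solv(p_2)}{\ell d(p_1,p_2)}
$$
holds for all $p_1,p_2\in\cball{\bar p}{\delta_\Solv}$.

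Next I use Corollary \ref{cor:isocalmSolv}, which applies because the hypotheses of Theorem \ref{thm:SolvAubin} contain those of Theorem \ref{thm:Solvcalm}. Indeed, the Aubin property implies calmness and lower semicontinuity for both $C$ and $Q$. The corollary proof actually shows the localization $\Solv(\bar p)\cap\cball{\bar x}{\eta_0}=\{\bar x\}$ for some $\eta_0>0$. I then set
$$
\eta=\min\{\eta_\Solv,\eta_0/2\}.
$$

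Now take $p_n\to\bar p$ and $x_n\in\Solv(p_n)\cap\cball{\bar x}{\eta}$. For $n$ large, so that $p_n\in\cball{\bar p}{\delta_\Solv}$, I apply the inclusion with $p_1=p_n$ and $p_2=\bar p$. This gives
$$
\dist{x_n}{\Solv(\bar p)}\le\ell d(p_n,\bar p).
$$
I choose $n$ even larger so that also $\ell d(p_n,\bar p)<\eta_0/2$. Then there is $z_n\in\Solv(\bar p)$ with $\|x_n-z_n\|<\eta_0/2$, up to an arbitrarily small slack $\epsilon_n$, e.g. $\epsilon_n=d(p_n,\bar p)/n$, or an exact infimum if preferred. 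Hence $\|z_n-\bar x\|\le\|z_n-x_n\|+\|x_n-\bar x\|<\eta_0$, which forces $z_n=\bar x$. Therefore $\|x_n-\bar x\|\le(\ell+1/n)\,d(p_n,\bar p)$. When $d(p_n,\bar p)=0$, we simply have $x_n\in\Solv(\bar p)\cap\cball{\bar x}{\eta_0}$, so $x_n=\bar x$.

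This estimate gives convergence of $x_n$ to $\bar x$ at a rate linear in $d(p_n,\bar p)$, with constant arbitrarily close to the bound in (\ref{in:AubinSolvest}). The finitely many initial indices are irrelevant for convergence.

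The main obstacle is the isolation step. I need $\dist{x_n}{\Solv(\bar p)}$ to be attained near $\bar x$, i.e. the approximate nearest point $z_n$ must lie inside the isolation ball $\cball{\bar x}{\eta_0}$. This is handled by halving the radii as above, so that the nearest point cannot escape to some other component of $\Solv(\bar p)$. In fact, since $\Solv(\bar p)$ is convex (Remark \ref{rem:Solvtoppro}), the localization already forces $\Solv(\bar p)=\{\bar x\}$, which makes this step immediate.
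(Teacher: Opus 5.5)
Your proposal is correct and follows essentially the paper's own argument: apply the Aubin inclusion from Theorem \ref{thm:SolvAubin} with $p_1=p_n$ and $p_2=\bar p$, then use that $\Solv(\bar p)$ reduces to $\{\bar x\}$. The paper asserts $\Solv(\bar p)=\{\bar x\}$ without justification, even though isolated calmness only gives $\Solv(\bar p)\cap\cball{\bar x}{\eta_0}=\{\bar x\}$; your radius-halving argument, or alternatively the convexity of $\Solv(\bar p)$, supplies that missing step.
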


\begin{proof}
The thesis follows from the fact that, in the present circumstance,
$\Solv$ has both the Aubin property and the isolated calmness property.
Thus, if $\eta>0$ is as in the Definition \ref{def:Lipsemcont}(iv)
and $\ell>\Lip{\Solv}{\bar p}{\bar x}$, since $\Solv(\bar p)=\{\bar x\}$,
for every sequence $(p_n)_{n\in\N}$
converging to $\bar p$, one has
$$
  \Solv(p_n)\cap\cball{\bar x}{\eta}\subseteq
  \cball{\bar x}{\ell d(p_n,\bar p)},
$$
which gives
$$
  \|x_n-\bar x\|\le\ell d(p_n,\bar p),
$$
for every sequence $(x_n)_{n\in\N}$ in $\Solv(p_n)\cap\cball{\bar x}{\eta}$.
\smartqed\qed
\end{proof}

The next example shows that the condition $(\DRC)$ is too strong to
yield a full characterization of the Aubin property of $\Solv$.

\begin{example}
Let $P=\X=\Y=\R$ be equipped with its standard (Euclidean) structure.
Consider the family $(\SFP_p)$ defined by the following data
$$
   C(p)=[-|p|,+\infty),\qquad Q(p)=[|p|p^2,+\infty),\qquad A(p,x)=p^2x,
$$
and fix $\bar p=\bar x=0$. This family admits as a solution map
the multifunction $\Solv:\R\rightrightarrows\R$ given by
$$
   \Solv(p)=[|p|,+\infty),\quad\forall p\in\R.
$$
Since $C$, $Q$ and $\Solv$ are the epigraphical map associated to
the functions $p\mapsto -|p|$, $p\mapsto |p|p^2$, and $p\mapsto |p|$,
respectively, which are all locally Lipschitz around $\bar p=0$,
they are locally Lipschitz continuous around $0$ as a set-valued map,
and hence satisfy the Aubin property around $(0,0)$.
Since $A\in C^1(\R^2)$, it is Lipschitz continuous around $(0,0)$.
Let us see what happens with the condition $(\DRC)$.
Fix an arbitrary $\delta>0$. Choose $(p,x)\in\ball{0}{\delta}\times
\ball{0}{\delta}$ such that $p\in (0,\delta)$ and $x\in (-p,p)$.
Then, since $x<|p|$, one has $A(p,x)=p^2x<p^2|p|$, so
$A(p,x)\not\in Q(p)$ and $(p,x)\in [\mf_{A,Q}>0]\cap [\ball{0}{\delta}\times
\ball{0}{\delta}]$. Therefore, one obtains
$$
  \Ncone{A(p,x)}{\cball{Q(p)}{d_{A,Q}}}\cap\Usfer=\{-1\}
$$
and consequently
$$
   A(p,\cdot)^*\left(\Ncone{A(p,x)}{\cball{Q(p)}{d_{A,Q}}}\cap\Usfer\right)
   =\{-p^2\}.
$$
On the other hand, as $x>-p=-|p|$, one has $x\in\inte C(p)$, giving
$$
  \Ncone{x}{\cball{C(p)}{d_C}}\cap\Uball=\Ncone{x}{C(p)}\cap\Uball=\{0\}.
$$
According to (\ref{eq:deftauAQ}), it follows that $\tau_{A,Q}(\delta)=0$, which
implies $\tau(\delta)\le\tau_{A,Q}(\delta)=0$.
As this happens for every $\delta>0$, the condition $(\DRC)$ is not
fulfilled by the family $(\SFP_p)$ in consideration.
\end{example}

By imposing proper additional assumptions, from Theorem \ref{thm:SolvAubin}
one can derive a condition for the local Lipschitz continuity of $\Solv$.
In view of the next result, recall that a set-valued map $F:P\rightrightarrows\X$ is
said to be:
\begin{itemize}
  \item locally compact around $p_0\in P$ if there exist $r>0$ and
a compact set $K\subseteq\X$ such that
$$
  F(p)\subseteq K,\quad\forall p\in\cball{p_0}{r};
$$

  \item closed at $p_0\in P$ if for every $x\not\in F(p_0)$ there exist
  positive $\delta$ and $\epsilon$ such that $F(p)\cap\cball{x}{\epsilon}
  =\varnothing$, for every $p\in\cball{p_0}{\delta}$.
\end{itemize}

\vskip.25cm

\begin{corollary}
Under the hypotheses of Theorem \ref{thm:SolvAubin}, suppose in addition that
either
\begin{itemize}
  \item[$(\tilde{\textrm{v}_1})$] $C:P\rightrightarrows\X$ is locally compact
  around $\bar p$;

or

  \item[$(\tilde{\textrm{v}_2})$] $(\Y,\|\cdot\|)$ is a Banach space, $Q:P\rightrightarrows\Y$ is locally
  compact around $\bar p$, and each $A(p,\cdot)\in\Lin(\X,\Y)$ is invertible, for every
  $p$ in a neighbourhood of $\bar p$;

  \item[$(\textrm{vi})$] $\Solv$ is u.s.c. on $P$.
\end{itemize}
Then, $\Solv$ is locally Lipschitz around $\bar p$, i.e. there exist $\ell>0$ and $\delta>0$
such that
$$
  \haus{\Solv(p_1)}{\Solv(p_2)}\le \ell d(p_1,p_2),\quad\forall
  p_1,\, p_2\in\cball{\bar p}{\delta}.
$$
\end{corollary}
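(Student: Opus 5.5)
The plan is to upgrade the local inclusion given by the Aubin property to a global Hausdorff estimate. Theorem \ref{thm:SolvAubin} provides $\eta_\Solv,\delta_\Solv,\ell$ with
$$
\Solv(p_1)\cap\cball{\bar x}{\eta_\Solv}\subseteq\cball{\Solv(p_2)}{\ell d(p_1,p_2)},\quad \forall p_1,p_2\in\cball{\bar p}{\delta_\Solv}.
$$
It therefore suffices to remove the localization $\cball{\bar x}{\eta_\Solv}$. I would do this with a compactness argument supplied by $(\tilde{\textrm{v}_1})$ or $(\tilde{\textrm{v}_2})$, combined with the upper semicontinuity $(\textrm{vi})$. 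The bound on $\exc{\Solv(p_2)}{\Solv(p_1)}$ then follows by exchanging $p_1$ and $p_2$.

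\emph{Step 1: compactness.} Under $(\tilde{\textrm{v}_1})$, $\Solv(p)\subseteq C(p)\subseteq K$ for $p\in\cball{\bar p}{r}$, so all values of $\Solv$ near $\bar p$ lie in a fixed compact set. Since $\Solv(\bar p)$ is closed (Remark \ref{rem:Solvtoppro}), it is compact. Under $(\tilde{\textrm{v}_2})$, one has $\Solv(p)\subseteq A(p,\cdot)^{-1}(Q(p))\subseteq A(p,\cdot)^{-1}(K_Q)$. Here $\X$ and $\Y$ are Banach, so by the open mapping theorem each inverse is bounded, and $A(p,\cdot)^{-1}(K_Q)$ is compact for each fixed $p$. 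To obtain a single compact set for all $p$ near $\bar p$, I would use the Lipschitz continuity of $A$ in $p$ together with a Neumann series argument to control $A(p,\cdot)^{-1}$ uniformly. This requires some care about the continuity of $p\mapsto A(p,\cdot)$ in operator norm.

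\emph{Step 2: covering.} This is the step I expect to be the main obstacle. For every $x_0\in\Solv(\bar p)$, the hypotheses of Theorem \ref{thm:SolvAubin} are needed at $(\bar p,x_0)$, whereas they are stated only at $(\bar p,\bar x)$. I would read the corollary as requiring (i)--(iv) at each point of $\Solv(\bar p)$, or else observe that in the intended applications $\Solv(\bar p)$ lies in a ball where the local constants are available. Granting this, for each $x_0$ we get $\eta_{x_0},\delta_{x_0},\ell_{x_0}$. By compactness, $\Solv(\bar p)$ is covered by finitely many open balls $\ball{x_i}{\eta_{x_i}/2}$. Set $\ell=\max_i\ell_{x_i}$, let $\delta'=\min_i\delta_{x_i}$, and let $U=\bigcup_i\ball{x_i}{\eta_{x_i}/2}$, an open set containing $\Solv(\bar p)$.

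\emph{Step 3: upper semicontinuity.} By $(\textrm{vi})$ there is $\delta''>0$ with $\Solv(p)\subseteq U$ for all $p\in\cball{\bar p}{\delta''}$. Take $\delta=\min\{\delta',\delta''\}$ and $p_1,p_2\in\cball{\bar p}{\delta}$. Every $x\in\Solv(p_1)$ then lies in some $\cball{x_i}{\eta_{x_i}}$, so the local inclusion at $(\bar p,x_i)$ gives $\dist{x}{\Solv(p_2)}\le\ell d(p_1,p_2)$. Taking the supremum over $x$ gives $\exc{\Solv(p_1)}{\Solv(p_2)}\le \ell d(p_1,p_2)$, and symmetry yields the Hausdorff estimate.

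If $\Solv(p)=\varnothing$ for some $p$, the estimate is meaningless, so nonemptiness near $\bar p$ must be checked. This follows from Theorem \ref{thm:Solverbo}(t).
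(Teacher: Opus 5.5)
Your concern in Step 2 cannot be patched: with the hypotheses of Theorem \ref{thm:SolvAubin} imposed only at $(\bar p,\bar x)$, the corollary is false.

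Take $P=\X=\Y=\R$, $C(p)=[0,1+\sqrt{|p|}]$, $Q(p)=\R$, $A(p,x)=x$ and $\bar p=\bar x=0$, so that $\Solv(p)=C(p)$. Then all the hypotheses hold:
\begin{itemize}
\item For $|p_1|,|p_2|\le 1$ one has $C(p_1)\cap\cball{0}{1/2}=[0,1/2]\subseteq C(p_2)$, so $C$ has the Aubin property around $(0,0)$. The maps $Q$ and $A$ trivially satisfy (ii) and (iii).
\item Since $[\mf_{A,Q}>0]=\varnothing$, you get $\tau_{A,Q}(\delta)=+\infty$. For $\delta\le 1$, every $(p,x)\in[\mf_C>0]$ in the $\delta$-box has $x<0$. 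There the set in the definition of $\tau_C(\delta)$ is $\{0\}+\{-1\}$, so $\tau(\delta)=1$ and $(\DRC)$ holds.
\item $C(p)\subseteq[0,2]$ for $|p|\le 1$, and $\Solv$ is u.s.c. on $\R$.
\end{itemize}
Nevertheless $\haus{\Solv(p)}{\Solv(0)}=\sqrt{|p|}$, which is not $O(|p|)$. The failure sits at $(0,1)$, where $\Solv$ does not have the Aubin property.

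The paper's proof has the same hole. The characterization it cites from Mordukhovich's book (for closed, locally compact maps) requires the Aubin property around $(\bar p,y)$ for \emph{every} $y\in\Solv(\bar p)$, as the example shows it must. Theorem \ref{thm:SolvAubin} delivers this only at $y=\bar x$. So your first reading is the correction the statement needs: impose the hypotheses of Theorem \ref{thm:SolvAubin} at every $(\bar p,y)$ with $y\in\Solv(\bar p)$, or simply assume the Aubin property of $\Solv$ there. Your second alternative (``in the intended applications\ldots'') is not something the hypotheses provide.

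Under that corrected hypothesis, your Steps 2--3 are correct. They are the finite-covering argument underlying the cited characterization, written out directly, and this is a small improvement on the paper's route. The paper converts $(\textrm{vi})$ into closedness of the graph and then needs local compactness of $\Solv$ to apply the cited theorem. You use $(\textrm{vi})$ directly as upper semicontinuity at $\bar p$, so you only need $\Solv(\bar p)$ to be compact.

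This matters under $(\tilde{\textrm{v}_2})$. There the paper only obtains $\Solv(p)\subseteq A(p,\cdot)^{-1}(K)$, a compact set depending on $p$, which does not give local compactness. Your Neumann-series plan would not give it either: it yields uniformly bounded inverses, not relative compactness of $\bigcup_p A(p,\cdot)^{-1}(K)$. For instance, let $P$ be the open ball of radius $1/2$ in $\ell^2$, $\X=\Y=\ell^2$, $A(p,x)=x+\langle e_0,x\rangle p$, $C\equiv\X$ and $Q\equiv\{e_0\}$. Then $\Solv(p)=\{e_0-p/(1+\langle e_0,p\rangle)\}$. For every $r>0$, the set $\bigcup_{\|p\|<r}\Solv(p)$ contains $e_0-(r/2)e_n$ for all $n\ge 1$, so it is not relatively compact.

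So drop that part of Step 1. Under $(\tilde{\textrm{v}_2})$, $\Solv(\bar p)$ is a closed subset of the compact set $A(\bar p,\cdot)^{-1}(K)$, and that is all Steps 2--3 use. Finally, nonemptiness of $\Solv(p)$ near $\bar p$ already follows from the Aubin property at $(\bar p,\bar x)$ by taking $p_1=\bar p$.
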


\begin{proof}
Notice that if either $(\tilde{\textrm{v}_1})$ or $(\tilde{\textrm{v}_2})$ is true,
then $\Solv$ turns out to be locally compact around $\bar p$.
Indeed, if $(\tilde{\textrm{v}_1})$ holds, one has immediately
$$
  \Solv(p)\subseteq C(p)\subseteq K,\quad\forall p\in\cball{\bar p}{r_C},
$$
for some $r_C>0$ and some compact subset $K\subseteq\X$.

If $(\tilde{\textrm{v}_2})$ holds, there exist $r_Q>0$ and a compact set
$K\subseteq\Y$ such that
$$
   Q(p)\subseteq K,\quad\forall p\in\cball{\bar p}{r_Q}.
$$
Without loss of generality, it is possible to assume that
$A(p,\cdot)$ is invertible for every $ p\in\cball{\bar p}{r_Q}$.
This implies, in particular, that each $A(p,\cdot)$ is onto and
hence, by virtue of the Banach Open Mapping Theorem, $A(p,\cdot)^{-1}$
is (Lipschitz) continuous on $\Y$. Thus, one obtains
$$
   A(p,\cdot)^{-1}(Q(p))\subseteq A(p,\cdot)^{-1}(K),
   \quad\forall p\in\cball{\bar p}{r_Q},
$$
where the second set in the inclusion is compact, as an image
of a compact set through a continuous map. Consequently, it is possible to write
$$
  \Solv(p)\subseteq A(p,\cdot)^{-1}(K),\quad\forall
  p\in\cball{\bar p}{r_Q},
$$
which shows that $\Solv$ is locally compact around $\bar p$
also in this case.
Moreover, according to what was noticed in Remark \ref{rem:Solvtoppro}, as $\Solv$ is
assumed to be u.s.c., its graph is closed. This fact is known to
entail the property of being closed at $\bar p$.

Then it remains to apply the characterization of
the local Lipschitz continuity for closed and locally compact multifunctions in terms
of the Aubin property, provided by \cite[Theorem 1.42]{Mord06}.
This completes the proof.
\smartqed\qed
\end{proof}


\subsection{A condition for Lipschitz upper semicontinuity}

Lipschitz upper semicontinuity is the only property, among those
appearing in Definition \ref{def:Lipsemcont}, which is not referred to
a point in the graph (see Remark \ref{rem:lscLiplsc}(iii)). Such a feature
requires an adaptation of the approach so far in use. More technically,
the error bound estimate in (\ref{in:serbo}), which is valid only around
the reference point $(\bar p,\bar x)$, is not longer adequate for the present context,
so it must be replaced by a global error bound, as stemming from Lemma
\ref{lem:sovebconvex}. In turn, this change requires a consequent adaptation
of the dual regularity condition.

Fixed $\bar p\in P$ and given $x\in\X$, it is helpful to use again the
notations $\mf_{A,Q}(\bar p,x)=
\dist{A(\bar p,x)}{Q(\bar p)}$, $\mf_C(\bar p,x)=\dist{x}{C(\bar p)}$,
$\bar d_{A,Q}=\dist{A(\bar p,x)}{Q(\bar p)}$ and $\bar d_C=\dist{x}{C(\bar p)}$.
With these notations, let us introduce the following quantities
\begin{eqnarray}   \label{eq:deftauAQinf}
    \tau_{A,Q}(\bar p)=\inf\bigg\{\|x^*\|:\ & x^*\in A(\bar p,\cdot)^*
    \left(\Ncone{A(\bar p,x)}{\cball{Q(\bar p)}{\bar d_{A,Q}})}\cap\Usfer^*\right) \nonumber \\
     & +[\Ncone{x}{\cball{C(\bar p)}{\bar d_C}}\cap\Uball^*],\quad
     x\in [\mf_{A,Q}(\bar p,\cdot)>0]\bigg\}
\end{eqnarray}
and
\begin{eqnarray}   \label{eq:deftauCinf}
   \tau_C(\bar p)=\inf\bigg\{\|x^*\|:\ & x^*\in A(\bar p,\cdot)^*
   \left(\Ncone{A(\bar p,x)}{Q(\bar p)}\cap\Uball^*\right)+[\Ncone{x}{\cball{C(\bar p)}{\bar d_C}}\cap\Usfer^*], \nonumber \\
     &  x\in [\mf_{A,Q}(\bar p,\cdot)=0]\cap[\mf_{C}(\bar p,\cdot)>0]\bigg\}.
\end{eqnarray}
A problem $(\SFP_{\bar p})$ is said to satisfy  the {\it global dual regularity condition} at
$\bar p\in P$ (for short, \GDRC) if
$$
  \tau(\bar p)=\min\{\tau_{A,Q}(\bar p),\,
  \tau_C(\bar p)\}>0.   \leqno(\GDRC)
$$

\vskip.25cm

\begin{theorem}[Lipschitz upper semicontinuity of $\Solv$]    \label{thm:SolvLipusc}
With reference to a family $(\SFP_p)$, let $(\bar p,\bar x)\in\graph\Solv$.
Suppose that:
\begin{itemize}
  \item[(i)]  $C:P\rightrightarrows\X$ is Lipschitz u.s.c. at $\bar p$;

  \item[(ii)] $Q:P\rightrightarrows\Y$ is Lipschitz u.s.c. at $\bar p$;

  \item[(iii)] $A(\cdot,x):P\rightarrow\Y$ is calm at $\bar p$, uniformy in $x\in\X$.

  \item[(iv)] the condition $(\GDRC)$ holds.
\end{itemize}
Then,  $\Solv$ is Lipschitz u.s.c. at $\bar p$ and it holds
\begin{equation}    \label{in:LipuscSolvest}
    \Lipusc{\Solv}{\bar p}\le {\Lipusc{C}{\bar p}
    +\clm{A(\cdot,x)}{\bar p}+\Lipusc{Q}{\bar p}\over \tau(\bar p)}.
\end{equation}
\end{theorem}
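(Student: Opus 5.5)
The plan is to replace the local error bound of Theorem \ref{thm:Solverbo} with a global one at the fixed parameter $\bar p$, obtained from Lemma \ref{lem:sovebconvex}, and then estimate $\mf(\bar p,x)$ for $x\in\Solv(p)$ exactly as in the proof of Theorem \ref{thm:Solvcalm}, but without localizing around $\bar x$.

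\textbf{Step 1: global error bound at $\bar p$.} Apply Lemma \ref{lem:sovebconvex} to $\varphi=\mf(\bar p,\cdot)$. This function is convex and (Lipschitz) continuous on the Banach space $\X$ by Remark \ref{rem:Lipcontmf} and assumption $(a_0)$. If $[\mf(\bar p,\cdot)>0]=\varnothing$, then $\Solv(\bar p)=\X$ and the thesis is trivial. Otherwise, I repeat verbatim the subdifferential computation in the proof of Theorem \ref{thm:Solverbo}: the sum rule (\ref{eq:subdsumr}), the chain rule (\ref{eq:subdchainr}) and formula (\ref{eq:subddist}), split into the cases $\bar d_{A,Q}>0$ and $\bar d_{A,Q}=0,\ \bar d_C>0$. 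This gives
$$
\dist{\nullv^*}{\partial\mf(\bar p,\cdot)(x)}\ge\tau(\bar p)\quad\forall x\in[\mf(\bar p,\cdot)>0].
$$
The only change is that $(p,x)$ ranges over $\{\bar p\}\times[\mf(\bar p,\cdot)>0]$ instead of a neighbourhood, which is exactly how (\ref{eq:deftauAQinf}) and (\ref{eq:deftauCinf}) are built. Hence $\tau_\partial\ge\tau(\bar p)>0$ by (GDRC), and Lemma \ref{lem:sovebconvex} yields
$$
\dist{x}{\Solv(\bar p)}\le\tau(\bar p)^{-1}\mf(\bar p,x)\quad\forall x\in\X,
$$
using Proposition \ref{pro:scacharSolv}. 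The inequality holds trivially when $\mf(\bar p,x)=0$.

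\textbf{Step 2: estimate $\mf(\bar p,x)$ for $x\in\Solv(p)$.} Fix $\ell_C>\Lipusc{C}{\bar p}$, $\ell_Q>\Lipusc{Q}{\bar p}$ and $\gamma_A>\clm{A(\cdot,x)}{\bar p}$. Here the calmness bound is read as a uniform one, $\sup_x$, by hypothesis (iii). This gives radii $r_C,r_Q,r_A$. Let $r=\min\{r_C,r_Q,r_A\}$, $p\in\cball{\bar p}{r}$ and $x\in\Solv(p)$. Since $x\in C(p)\subseteq\cball{C(\bar p)}{\ell_Cd(p,\bar p)}$, we get $\dist{x}{C(\bar p)}\le\ell_Cd(p,\bar p)$. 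Since $A(p,x)\in Q(p)\subseteq\cball{Q(\bar p)}{\ell_Qd(p,\bar p)}$ and $\|A(\bar p,x)-A(p,x)\|\le\gamma_Ad(p,\bar p)$ uniformly in $x$, we get $\dist{A(\bar p,x)}{Q(\bar p)}\le(\gamma_A+\ell_Q)d(p,\bar p)$. Combining these with Step 1 gives
$$
\Solv(p)\subseteq\cball{\Solv(\bar p)}{\tau(\bar p)^{-1}(\ell_C+\gamma_A+\ell_Q)d(p,\bar p)}.
$$
Letting the constants tend to their infima yields (\ref{in:LipuscSolvest}).

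\textbf{Main obstacle.} The main obstacle is Step 1, namely checking that the hypotheses of Lemma \ref{lem:sovebconvex} hold globally. The key point is that the subdifferential inclusion from Theorem \ref{thm:Solverbo} holds at every $x$, not just near $\bar x$. Nonemptiness of $\Solv(\bar p)$ is already guaranteed by $\bar x$.

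A second delicate point is the uniformity in (iii). Without it, the bound in Step 2 is not uniform over all of $\Solv(p)$, which may be unbounded. This uniformity is exactly what removes the need to localize $x$.
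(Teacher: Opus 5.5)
Your proposal is correct and follows essentially the same route as the paper. The paper likewise obtains the global error bound $\dist{x}{\Solv(\bar p)}\le\tau(\bar p)^{-1}\mf(\bar p,x)$ from Lemma \ref{lem:sovebconvex}, using the subdifferential inclusions and $(\GDRC)$, and then combines it with $\dist{x}{C(\bar p)}\le\ell_C d(p,\bar p)$ and $\dist{A(\bar p,x)}{Q(\bar p)}\le(\gamma_A+\ell_Q)d(p,\bar p)$ for $x\in\Solv(p)$; your separate handling of the case $[\mf(\bar p,\cdot)>0]=\varnothing$, which the paper leaves implicit although Lemma \ref{lem:sovebconvex} requires $[\varphi>0]\ne\varnothing$, is a small welcome addition.
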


\begin{proof}
By applying the sum and the chain rule for the subdifferential
(see formulae (\ref{eq:subdsumr}) and (\ref{eq:subdchainr})), as well as the
representation of the subdifferential of the distance function in term
of normal cone (see formula (\ref{eq:subddist})), one obtains that,
if $(p,x)\in [\mf_{A,Q}(\bar p,\cdot)>0]$, then it holds
$$
  \partial\mf(\bar p,\cdot)(x)\subseteq
  A(\bar p,\cdot)^*\left(\Ncone{A(\bar p,x)}{\cball{Q(\bar p)}{\bar d_{A,Q}})}\cap\Usfer^*\right)
  +[\Ncone{x}{\cball{C(\bar p)}{\bar d_C}}\cap\Uball^*],
$$
whereas, if $(p,x)\in [\mf_{A,Q}(\bar p,\cdot)=0]\cap[\mf_C(\bar p,\cdot)>0]$,
it holds
$$
  \partial\mf(\bar p,\cdot)(x)=
  A(\bar p,\cdot)^*\left(\Ncone{A(\bar p,x)}{A(\bar p)}\cap\Uball^*\right)
  +[\Ncone{x}{\cball{C(\bar p)}{\bar d_C}}\cap\Usfer^*].
$$
Therefore, by virtue of hypothesis (iv), one finds
$$
  \tau_\partial=\inf_{x\in[\mf(\bar p,\cdot)>0]}\dist{\nullv^*}{\partial\mf(\bar p,\cdot)(x)}
  \ge\tau(\bar p)>0,
$$
so the condition in (\ref{def:ebsolvconvex}) is satisfied.
In the light of Lemma \ref{lem:sovebconvex}, as the function $x\mapsto\mf(\bar p,x)$
is convex, real-valued and continuous on $\X$ (remember Remark \ref{rem:Lipcontmf}),
this fact yields the following global error bound estimate
\begin{equation}  \label{in:erboLipusc}
  \dist{x}{\Solv(\bar p)}\le \frac{\mf(\bar p,x)}{\tau(\bar p)},\quad\forall x\in\X.
\end{equation}
Now, according to hypothesis (i), fixed any $\ell_C>\Lipusc{C}{\bar p}$, there exists $\delta_C>0$
such that
$$
  C(p)\subseteq\cball{C(\bar p)}{\ell_C d(p,\bar p)},\quad\forall p\in\cball{\bar p}{\delta_C},
$$
which means that for every $x\in \Solv(p)\subseteq C(p)$
\begin{equation}    \label{in:distxCbarpLipusc}
  \dist{x}{C(\bar p)}\le \ell_C d(p,\bar p),\quad\forall p\in\cball{\bar p}{\delta_C}.
\end{equation}
Analogously, by hypothesis (ii), fixed any $\ell_Q>\Lipusc{Q}{\bar p}$, there exists $\delta_Q>0$
such that
$$
  Q(p)\subseteq\cball{Q(\bar p)}{\ell_Q d(p,\bar p)},\quad\forall p\in\cball{\bar p}{\delta_Q},
$$
which means that for every $y\in Q(p)$
\begin{equation}     \label{in:LipuscQy}
  \dist{y}{Q(\bar p)}\le \ell_Q d(p,\bar p),\quad\forall p\in\cball{\bar p}{\delta_Q}.
\end{equation}
By hypothesis (iii), fixed any $\gamma_A>\clm{A(\cdot,x)}{\bar p}$, there exists $\delta_A>0$
such that
\begin{equation}  \label{in:clmALipusc}
  \|A(p,x)-A(\bar p,x)\|\le\gamma_A d(p,\bar p),  \quad
  \forall p\in\cball{\bar p}{\delta_A},\ \forall  x\in\X.
\end{equation}
Now, define $\delta_\Solv=\min\{\delta_C,\, \delta_Q,\, \delta_A\}$.
From (\ref{in:LipuscQy}) and (\ref{in:clmALipusc}), whenever $x\in\Solv(p)$, so that $A(p,x)\in Q(p)$,
it follows
$$
  \dist{A(\bar p,x)}{Q(\bar p)}\le \|A(\bar p,x)-A(p,x)\|+\dist{A(p,x)}{Q(\bar p)}
  \le (\ell_A+\ell_Q)d(p,\bar p).
$$
Thus, by recalling the inequalities in (\ref{in:distxCbarpLipusc}) and (\ref{in:erboLipusc}),
one obtains that if $x\in\Solv(p)$, then it holds
\begin{eqnarray*}
  \dist{x}{\Solv(\bar p)} &\le& \frac{\dist{A(\bar p,x)}{Q(\bar p)}+\dist{x}{C(\bar p)}}{\tau(\bar p)} \\
   &\le& \frac{\left(\gamma_A+\ell_Q+\ell_C\right)}{\tau(\bar p)}d(p,\bar p),
   \quad\forall p\in\cball{\bar p}{\delta_\Solv}.
\end{eqnarray*}
By arbitrariness of $x\in\Solv(p)$, the above inequality results in
$$
  \Solv(p)\subseteq\cball{\Solv(\bar p)}{\frac{\gamma_A+\ell_Q+\ell_C}{\tau(\bar p)}d(p,\bar p)},
  \quad\forall p\in\cball{\bar p}{\delta_\Solv},
$$
which shows that $\Solv$ is Lipschitz u.s.c. at $\bar p$ with
$$
  \Lipusc{\Solv}{\bar p}\le\frac{\gamma_A+\ell_Q+\ell_C}{\tau(\bar p)}.
$$
From such inequality, by reasoning as already made in estimating the bound
associated to a quantitative semicontinuity property, one can deduce the
inequality in (\ref{in:LipuscSolvest}), thereby completing the proof.
\smartqed\qed
\end{proof}


\section{A special result with perturbations in a normed space}     \label{Sect:4}

Throughout the current section, the metric space $P$
is assumed to have, in particular, the structure of a normed vector space.
To mark the difference of such a specific setting, $(P,d)$ will be henceforth denoted
by $(\Pb,\|\cdot\|)$. Of course, such a specialization results in allowing
less general perturbations.
The next proposition states that the solution map associated to a
family $(\SFP_p)$ with data depending ``convexly" on the perturbation
parameter, inherits the convex dependence.
This result extends to parameterized families of ``split" feasibility problems
a phenomenon, which has been already observed for some of their
specific manifestations (e.g. parametric equality/inequality systems).

\begin{proposition}[Convexity of $\Solv$]    \label{pro:convSolvmap}
With reference to a family $(\SFP_p)$, with $p\in\Pb$, suppose that:
\begin{itemize}
  \item[(i)]  $C:\Pb\rightrightarrows\X$ is convex on $\Pb$;

  \item[(ii)] $Q:\Pb\rightrightarrows\Y$ is convex on $\Pb$;

  \item[(iii)] $A\in\Lin(\Pb\times\X,\Y)$.
\end{itemize}
Then, the multifunction $\Solv:\Pb\rightrightarrows\X$ is convex on $\Pb$.
\end{proposition}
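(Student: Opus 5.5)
The plan is to verify directly that $\graph\Solv$ is a convex subset of $\Pb\times\X$, using the representation
$$
\graph\Solv=\{(p,x)\in\Pb\times\X:\ (p,x)\in\graph C,\ (p,A(p,x))\in\graph Q\}.
$$
We take two points $(p_1,x_1),(p_2,x_2)\in\graph\Solv$ and $t\in[0,1]$. We set $p_t=tp_1+(1-t)p_2$ and $x_t=tx_1+(1-t)x_2$, and show that $(p_t,x_t)\in\graph\Solv$. By (\ref{eq:Solvpsetchar}) this amounts to two memberships: $x_t\in C(p_t)$ and $A(p_t,x_t)\in Q(p_t)$.

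The first membership comes straight from hypothesis (i). Since $x_i\in C(p_i)$, we have $(p_i,x_i)\in\graph C$ for $i=1,2$. Convexity of $\graph C$ then gives $(p_t,x_t)\in\graph C$, that is, $x_t\in C(p_t)$.

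The second membership uses hypothesis (iii) and is the only step needing care. We need $A$ to be jointly linear on $\Pb\times\X$, and not merely linear in $x$ as in $(a_2)$. Joint linearity gives
$$
A(p_t,x_t)=A\bigl(t(p_1,x_1)+(1-t)(p_2,x_2)\bigr)=tA(p_1,x_1)+(1-t)A(p_2,x_2).
$$
Now $A(p_i,x_i)\in Q(p_i)$, so $(p_i,A(p_i,x_i))\in\graph Q$ for $i=1,2$. Convexity of $\graph Q$ from hypothesis (ii) yields $(p_t,\,tA(p_1,x_1)+(1-t)A(p_2,x_2))\in\graph Q$, which by the display above means $A(p_t,x_t)\in Q(p_t)$.

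Combining the two memberships gives $x_t\in\Solv(p_t)$, so $\graph\Solv$ is convex. No real obstacle is expected. The only point to flag is that convexity of $\Solv$ fails in general if $A$ is only linear in $x$, since then the image of a convex combination need not be the convex combination of the images. Hypothesis (iii) is exactly what removes this difficulty.
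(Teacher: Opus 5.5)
Your proof is correct and follows essentially the same route as the paper. Convexity of $\graph C$ gives $x_t\in C(p_t)$, and joint linearity of $A$ together with convexity of $\graph Q$ gives $A(p_t,x_t)\in Q(p_t)$; because you work directly with pairs in $\graph\Solv$, you also avoid the paper's separate remark about the case $p_i\notin\dom\Solv$.
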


\begin{proof}
Take arbitrary $p_1,\, p_2\in\dom\Solv$, $x_1\in\Solv(p_1)$, $x_2\in\Solv(p_2)$,
and $t\in [0,1]$. This means that $x_1\in C(p_1)\cap A(p_1,\cdot)^{-1}(Q(p_1))$
and $x_2\in C(p_2)\cap A(p_2,\cdot)^{-1}(Q(p_2))$. Since $C$ is convex, from the
above inclusions it follows
\begin{equation}   \label{in:convSolv1}
  t x_1+(1-t)x_2\in tC(p_1)+(1-t)C(p_2)\subseteq C(tp_1+(1-t)p_2).
\end{equation}
Moreover, by the joint linearity of $A$ and the convexity of $Q$, one has
\begin{eqnarray}     \label{in:convSolv2}
  A(t p_1+(1-t)p_2,t x_1+(1-t)x_2) &=& tA(p_1,x_1)+(1-t)A(p_2,x_2)  \\
   &\in & tQ(p_1)+(1-t)Q(p_2)\subseteq Q(t p_1+(1-t)p_2).  \nonumber
\end{eqnarray}
The inclusions in (\ref{in:convSolv1})  and (\ref{in:convSolv2}) result
in
\begin{eqnarray*}
   t x_1+(1-t)x_2 \in  C(tp_1+(1-t)p_2)
   \cap\  A(tp_1+(1-t)p_2,\cdot)^{-1}   \left(Q(t p_1+(1-t)p_2)\right),
\end{eqnarray*}
which entails $t x_1+(1-t)x_2\in\Solv(t p_1+(1-t)p_2)$. By arbitrariness
of $x_1\in\Solv(p_1)$ and $x_2\in\Solv(p_2)$, the last inclusion enables to
conclude that
$$
  t\Solv(p_1)+(1-t)\Solv(p_2)\subseteq\Solv(t p_1+(1-t)p_2),
  \quad\forall p_1,\, p_2\in\dom\Solv.
$$
If at least one among $p_1$ and $p_2$ does not belong to $\dom\Solv$,
the last inclusion becomes trivially true, according to the conventions
$t\cdot\varnothing=\varnothing$ and $\varnothing+\Omega=\varnothing$.
\smartqed\qed
\end{proof}

Elementary counterexamples show that, if one of the hypotheses of
Proposition \ref{pro:convSolvmap} is dropped out, then $\Solv$ may fail to be convex.
As it happens for scalar functions, the combination of convexity and
continuity properties yields Lipschitzian behaviour of multifunction,
provided that a proper qualification conditions is in force.

\begin{theorem}[Aubin property of $\Solv$ under convexity]     \label{thm:Aubproconv}
Given a family $(\SFP_p)$ and $\bar p\in\Pb$, let $(\bar p,\bar x)\in\graph\Solv$.
Suppose that:
\begin{itemize}
  \item[(i)]  $C:\Pb\rightrightarrows\X$ is convex and i.s.c. at $\bar p$;

  \item[(ii)] $Q:\Pb\rightrightarrows\Y$ is convex and i.s.c. at $\bar p$;

  \item[(iii)] $A\in\Lin(\Pb\times\X,\Y)$;

  \item[(iv)] the condition $(\DRC)$ holds.
\end{itemize}
Then,  $\Solv:\Pb\rightrightarrows\X$ has the Aubin property around $(\bar p,\bar x)$.
\end{theorem}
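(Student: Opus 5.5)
The plan is to use the scalarization criterion of Proposition \ref{pro:scacriAubcont}. By Proposition \ref{pro:convSolvmap}, hypotheses (i)--(iii) make $\Solv$ a convex multifunction. Lemma \ref{lem:convmfunct} then shows that $\varphi(p,x)=\dist{x}{\Solv(p)}$ is convex on $\Pb\times\X$. It therefore suffices to prove that $\varphi$ is locally Lipschitz around $(\bar p,\bar x)$. For a convex function on a normed space, local Lipschitz continuity near a point follows once $\varphi$ is bounded above on some neighbourhood of that point. Such a bound gives continuity, and continuity of a convex function at an interior point of its domain gives local Lipschitz continuity. So the whole task reduces to producing a finite upper bound for $\varphi$ near $(\bar p,\bar x)$.

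To get this bound I will use the error bound of Theorem \ref{thm:Solverbo}. First I check its hypotheses. By Remark \ref{rem:lscLiplsc}(ii), the inner semicontinuity of $C$ and $Q$ at $\bar p$ gives lower semicontinuity of $C$ at $(\bar p,\bar x)$ and of $Q$ at $(\bar p,A(\bar p,\bar x))$. The map $A(\cdot,\bar x)$ is affine and continuous, since $A$ is jointly linear and bounded. Hypothesis (iv) is $(\DRC)$. Theorem \ref{thm:Solverbo} then yields $\eta,\zeta>0$ such that
$$
\varphi(p,x)\le\tau(\delta)^{-1}\left[\dist{A(p,x)}{Q(p)}+\dist{x}{C(p)}\right],\quad\forall (p,x)\in\cball{\bar p}{\zeta}\times\cball{\bar x}{\eta}.
$$
The right-hand side is finite: $C(p)$ and $Q(p)$ are nonempty by ($a_1$), and solvability is guaranteed by assertion (t). Finiteness alone is not enough; the bound must be uniform on a neighbourhood.

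For uniformity I use inner semicontinuity directly. By Remark \ref{rem:lscLiplsc}(ii), the functions $(p,x)\mapsto\dist{x}{C(p)}$ and $(p,y)\mapsto\dist{y}{Q(p)}$ are u.s.c. at $(\bar p,\bar x)$ and at $(\bar p,A(\bar p,\bar x))$ respectively, where both vanish. Composing the second with the continuous map $(p,x)\mapsto(p,A(p,x))$ shows that $\mf$ is u.s.c. at $(\bar p,\bar x)$ with $\mf(\bar p,\bar x)=0$. Hence $\mf\le 1$ on a smaller neighbourhood $U$ of $(\bar p,\bar x)$. Combined with the error bound, this gives $\varphi\le\tau(\delta)^{-1}$ on $U$. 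In particular $(\bar p,\bar x)$ is an interior point of $\dom\varphi$, and $\varphi$ is bounded above there.

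The concluding step is the standard convex-analysis fact that a convex function bounded above near an interior point of its domain is Lipschitz on a smaller ball around it. Applying it to $\varphi$ and then invoking Proposition \ref{pro:scacriAubcont} gives the Aubin property of $\Solv$ around $(\bar p,\bar x)$. The main obstacle I expect is the uniform local boundedness. The point is that convexity of $\varphi$ alone, with $\varphi(\bar p,\bar x)=0$, gives no Lipschitz behaviour: $\varphi$ could take the value $+\infty$ at points $p$ arbitrarily close to $\bar p$. This is exactly what $(\DRC)$ together with inner semicontinuity rules out. If the u.s.c. composition argument turns out to be delicate, a fallback is to bound $\dist{A(p,x)}{Q(p)}$ by $\|A\|_{\Lin}(\|p-\bar p\|+\|x-\bar x\|)+\dist{A(\bar p,\bar x)}{Q(p)}$, and to control the last term by inner semicontinuity of $Q$ alone.
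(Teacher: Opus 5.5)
Your proposal is correct and follows essentially the paper's own proof. Both arguments run through the same steps: convexity of $\Solv$ (Proposition \ref{pro:convSolvmap}) and hence of $(p,x)\mapsto\dist{x}{\Solv(p)}$; the error bound of Theorem \ref{thm:Solverbo} combined with upper semicontinuity of $\mf$ at $(\bar p,\bar x)$ obtained from inner semicontinuity of $C$ and $Q$; then local Lipschitz continuity of this convex function and Proposition \ref{pro:scacriAubcont}. You phrase the convex-analysis step via boundedness above, while the paper phrases it via continuity at an interior point, and these are equivalent for convex functions.

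As a side remark, since you only need an upper bound, assertion (t) of Theorem \ref{thm:Solverbo} already gives $\dist{x}{\Solv(p)}\le\|x-\bar x\|+\eta\le 2\eta$ on $\cball{\bar p}{\zeta}\times\cball{\bar x}{\eta}$. So the upper semicontinuity argument for $\mf$ could be dropped, and with it the need for inner rather than mere lower semicontinuity of $C$ and $Q$.
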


\begin{proof}
Observe that, as noticed in Remark \ref{rem:lscLiplsc}(ii), since
$C$ is i.s.c. at $\bar p$, then it is also l.s.c. at $(\bar p,\bar x)\in\graph C$.
Analogously, since $Q$ is i.s.c. at $\bar p$, then it is also l.s.c.
at $(\bar p,A(\bar p,\bar x))\in\graph Q$.
The hypothesis (iii) entails that $A(\cdot,\bar x)$ is continuous at $\bar p$.
Thus, under the condition $(\DRC)$, it is possible to apply Theorem \ref{thm:Solverbo},
which ensures that, for some positive $\zeta$ and $\eta$,
\begin{equation}    \label{neq:Solvnonempt}
    \Solv(p)\ne\varnothing,\quad\forall p\in\cball{\bar p}{\zeta},
\end{equation}
and the error bound estimate in (\ref{in:serbo}) holds.
Notice that (\ref{neq:Solvnonempt}) implies that the function $(p,x)\mapsto\dist{x}{\Solv(p)}$
is real-valued around $(\bar p,\bar x)$. Moreover, by the convexity of $C$ and $Q$,
being $A\in\Lin(\Pb\times\X,\Y)$,
according to Proposition \ref{pro:convSolvmap}, such function is convex on $\Pb\times\X$.
On the other hand, by recalling Remark \ref{rem:lscLiplsc}(ii), since $C$ is i.s.c.
at $\bar p$, one can say that the function $(p,x)\mapsto\dist{x}{C(p)}$ is
u.s.c. at $(\bar p,\bar x)$.
Similarly, since $Q$ is i.s.c. at $\bar p$, one can say that the function $(p,y)\mapsto
\dist{y}{Q(p)}$ is u.s.c. at $(\bar p,y)$, for every $y\in Q(\bar p)$. In turn,
by continuity of $A$, this implies that the function $(p,x)\mapsto\dist{A(p,x)}{Q(p)}$
is u.s.c. at $(\bar p,\bar x)$. Consequently, as a sum of two functions u.s.c. at $(\bar p,\bar x)$,
$\mf$ is u.s.c. at the same point.
Thus, as it is
$$
  \dist{\bar x}{\Solv(\bar p)}=0\le\dist{x}{\Solv(p)}\le\tau(\delta)^{-1}\mf(p,x),\quad\forall (p,x)
  \in\cball{\bar p}{\zeta}\times\cball{\bar x}{\eta},
$$
the function $(p,x)\mapsto\dist{x}{\Solv(p)}$ turns out to be continuous
at the point $(\bar p,\bar x)$ interior to its domain.
By convexity, this fact is known to imply the Lipschitz continuity of that
function around $(\bar p,\bar x)$ (see, for instance, \cite[Theorem 2.149]{MorNam22}).
In the light of the scalarization criterion formulated in
Proposition \ref{pro:scacriAubcont}, what has been obtained
is sufficient for $\Solv$ to have the Aubin property around $(\bar p,\bar x)$.
This completes the proof.
\smartqed\qed
\end{proof}

\begin{remark}
(i) Whenever $(\Pb,\|\cdot\|)$ is a Banach space, a shortcut for
the proof of Theorem \ref{thm:Aubproconv} can be devised, which relies on the
Robinson-Ursescu Theorem (see \cite[Theorem 5B.4]{DonRoc14}).
Indeed, the nonemptiness in (\ref{neq:Solvnonempt}) says that $\bar p
\in\inte\dom\Solv=\inte\Solv^{-1}(\X)$, namely $\bar p$ is
an interior point of the range of $\Solv^{-1}$.  In consideration of the convexity
of $\Solv$, and hence of $\Solv^{-1}$, by the Robinson-Ursescu Theorem this fact is equivalent to
the metric regularity of inverse multifunction $\Solv^{-1}$ around $(\bar x,\bar p)$,
which is known to be an equivalent reformulation of the Aubin property of $\Solv$
around $(\bar p,\bar x)$.

(ii) Whenever  $\Pb$ and $\X$ are finite-dimensional Euclidean spaces,
the hypotheses (i) and (ii) in Theorem \ref{thm:Aubproconv} can be
weakened, by replacing the inner semicontinuity of $C$ and $Q$ with their mere
lower semicontinuity at $(\bar p,\bar x)$ and $(\bar p, A(\bar p,\bar x))$,
respectively. Indeed, such weaker assumptions still enable to apply
Theorem \ref{thm:Solverbo}, so as to get that $(\bar p,\bar x)$ belongs
to the interior of the domain of the function $(p,x)\mapsto\dist{x}{\Solv(p)}$.
As convex functions are locally Lipschitz on the interior of finite-dimensional
domains (see \cite[Corollary 2.152]{MorNam22}), in such a circumstance one can
directly apply Proposition \ref{pro:scacriAubcont}.
\end{remark}

\vskip.5cm


\section{Conclusions}

The findings emerging from the exploration of well-known Lipschitzian properties
for those maps implicitly defined by a parameterized family
of ``split" feasibility problems tell us that the validity of each of these
properties for the problem data multifunctions can be inherited by the solution
map, provided that an adequate qualification condition is assumed to hold.
According to the proposed approach, such a qualification condition can
be read as a subdifferential slope condition, tying the behaviour
of all the problem data. In fact, it yields local solvability and
metric estimates, as if stemming from subtransversality assumptions.
The findings of the present study can serve as a starting point for investigating
further aspects of the perturbation analysis of ``split" feasibility problems
and as a solid base for elaborating approximated methods for their resolution.

\vskip.5cm


\begin{acknowledgements}
The current version of the paper has benefited from helpful discussions with Professor
Y. Censor from the University of Haifa, to whom the author is grateful.
\end{acknowledgements}



\end{document}